\renewcommand{\natural}{{\mathbb{N}}}
\newcommand{\real}{{\mathbb{R}}}
\newcommand{\union}{\cup}
\newcommand{\intersection}{\ensuremath{\operatorname{\cap}}}
\newcommand{\map}[3]{#1: #2 \rightarrow #3}
\newcommand{\setdef}[2]{\{#1 \; | \; #2\}}
\newcommand{\CC}{\mathcal{C}}
\newcommand{\NN}{\mathcal{N}}
\newcommand{\until}[1]{\{1,\dots,#1\}}
\newcommand{\fromto}[2]{\{#1,\dots,#2\}}
\newcommand{\degree}{\textup{\text{d}}}
\newcommand{\Lu}{N}
\newcommand{\eqmod}[1]{{\buildrel\rm mod \, #1\over=}}
\newtheorem{theorem}{Theorem}[section]
\newtheorem{proposition}[theorem]{Proposition}
\newtheorem{corollary}[theorem]{Corollary}
\newtheorem{definition}[theorem]{Definition}
\newtheorem{lemma}[theorem]{Lemma}
\newtheorem{remark}[theorem]{Remark}
\newcommand\oprocendsymbol{\hbox{$\square$}}
\newcommand\oprocend{\relax\ifmmode\else\unskip\hfill\fi\oprocendsymbol}
\begin{document}


\title{Controllability and observability of grid graphs via reduction and
  symmetries \thanks{Early short versions of this work appeared
    as~\cite{GN-GP:11a} and \cite{GN-GP:11b}: differences between this early
    short version and the current article include a much improved comprehensive
    and thorough treatment, revised complete proofs for all statements.}  
}

\author{Giuseppe Notarstefano \and Gianfranco Parlangeli \thanks{The research
    leading to these results has received funding from the European Community's
    Seventh Framework Programme (FP7/2007-2013) under grant agreement no. 224428
    (CHAT) and n. 231378 (CO3AUV).} 
  \thanks{Gianfranco Parlangeli and Giuseppe Notarstefano are with the
    Department of Engineering, University of Lecce, Via per Monteroni, 73100
    Lecce, Italy, \texttt{\{gianfranco.parlangeli,
      giuseppe.notarstefano\}@unile.it}}}






\maketitle

\begin{abstract}
  In this paper we investigate the controllability and observability properties
  of a family of linear dynamical systems, whose structure is induced by the
  Laplacian of a grid graph. This analysis is motivated by several applications
  in network control and estimation, quantum computation and discretization of
  partial differential equations.  Specifically, we characterize the structure
  of the grid eigenvectors by means of suitable decompositions of the graph. For
  each eigenvalue, based on its multiplicity and on suitable symmetries of the
  corresponding eigenvectors, we provide necessary and sufficient conditions to
  characterize all and only the nodes from which the induced dynamical system is
  controllable (observable).
  We discuss the proposed criteria and show, through suitable examples, how such
  criteria reduce the complexity of the controllability (respectively
  observability) analysis of the grid.
\end{abstract}

\section{Introduction}

In several modern engineering application areas, there are important physical
phenomena whose dynamic model is induced or strictly related to the structure of
a graph that models the interaction among components of the main system.

For example, in multi-agent system control (e.g., distributed robotics, sensor
networks or smart power grids) a communication or interaction graph induces the
structure of the feedback control input. In Markov chains the evolution of the
probabilities of a finite number of states can be modeled as a dynamical system
structured according to the graph of one-step transition probabilities.
Similarly, in quantum computation the evolution of interacting particles obeying
to quantum laws may be described by differential equations structured according
to an interaction graph. Another area in which a dynamic model is related
to a graph structure is the one of discretized partial differential
equations. In this case the graph determined by the discretization rule induces
the structure of the approximating ordinary differential equations.

In this paper we will concentrate on linear time invariant dynamical systems
whose state matrix is induced by the Laplacian of a fixed
undirected graph. In particular, we consider $d$-dimensional grid graphs, also
know as \emph{lattices}. This graph topology appears in many important
application scenarios as we show in the next sections.

We investigate the eigenstructure of the grid graph Laplacian in terms of
structural properties of the graph, and study how this structure affects the
controllability and observability properties of the induced system. Our main goal
is to relate the controllability and observability properties to graph theoretic
rules involving simple arithmetic operations on the graph labeling.

Controllability of complex network systems has received a
widespread attention in the last years in several areas
\cite{YYL-JJS-ALB:11}. We organize the relevant literature in three parts
according to the three main motivating scenarios for our problem set-up. First,
a system with Laplacian state dynamics arises in network systems running an
\emph{average consensus} algorithm.
A survey on these algorithms and their performance may be found in
\cite{ROS-JAF-RMM:07} and references therein.
The controllability problem for a leader-follower network was
introduced in \cite{HGT:04} for a single control node.
Intensive simulations were provided showing that it is ``unlikely'' for a
Laplacian based consensus network to be completely controllable.
In \cite{GP-GN:12}, see also \cite{GP-GN:10b}, ``necessary and sufficient''
conditions are provided to characterize the controllability and observability of
path and cycle graphs in terms of simple rules from number theory.
In \cite{AR-MJ-MM-ME:09} and \cite{SM-ME-AB:09}, see also \cite{MM-ME:10},
necessary conditions for controllability, based on suitable properties of the
graph, are provided. 
%
Other contributions on the controllability of network systems can be found in
\cite{BL-TC-LW-GX:08,ZJ-ZW-HL-ZW:10,ZJ-HL-THL-QL:10}.
Observability has been studied for the first time in \cite{MJ-ME:07}, where
necessary conditions for observability, as in the dual controllability setting
investigated in \cite{AR-MJ-MM-ME:09} and \cite{SM-ME-AB:09}, are provided.
%
%
A parallel research line investigates slightly different properties called
\emph{structural controllability}, \cite{MZ-HL:09,SJ-AA-AGA:11}, and
\emph{structural observability}, \cite{SS-CH:08}. Here, the objective is to
choose the nonzero entries of the consensus matrix (i.e. the state matrix of the
resulting network system) in order to obtain observability from a given set of
nodes. 
%
It is worth noting that controllability and observability of a network system are
necessary structural properties in many interesting network problem as estimation,
intrusion detection and formation control, e.g., \cite{SS-CH:08,
  FP-AB-FB:10, FP-RC-AB-FB:10b, mj-am-me:06}.

Second, continuous time quantum walks can be modeled as linear time invariant
systems whose state matrix is the imaginary skew-Hermitian matrix $i H$, where
$H$ is called Hamiltonian and can be either the Adjacency matrix or the
Laplacian of the transition graph \cite{JK:03}. 


The state transfer problem (which is strictly related to the controllability
problem) for quantum systems is investigated in \cite{CG:12}. The paper
explores the eigenstructure of the Hamiltonian (which is taken as the Adjacency
matrix of the underline graph) to characterize the state transfer.  A key
reference establishing a connection between our controllability analysis and the
controllability of quantum walks is \cite{DB-DD-LH-SS-MY:11}. Here the
controllability of continuous time quantum walks is investigated and related to
the controllability of a linear time invariant system with the structure
considered in our paper. The controllability of continuous time quantum walks on
graphs is also studied in \cite{CG-SS:10} and \cite{FA-DD:10}.
More specifically, $d$-dimensional grid or lattice graphs play an important role in quantum
computation. The controllability problem on this specific graph structure has
been investigated in \cite{FA-DD:09} and \cite{CG:12}.

Third and final a system with the structure studied in the paper appears when
discretizing partial differential equations (PDEs) containing the Laplace operator
\cite{ANT-AAS:63}. Such systems include several diffusion and wave propagation
equations appearing in fluid-dynamics, mechanics, acoustics and
electromagnetism. In \cite{RM:98} discretization of PDEs was indicated as a motivating
example for the analysis of the Laplacian eigenstructure. The controllability of
a discretized version of the heat equation on a one dimensional grid domain is
investigated in \cite{SA-MS-BG-KTY:04} and extended to the case of constrained
input in \cite{JSR:07}. Finally, in \cite{TM:11} and \cite{TM-MK:11} trajectory planning of multi-agent
systems is performed by studying a partial differential equation describing a
continuum of agents. That is, the multi-agent dynamics is obtained as a
discretized version of a partial differential equation. Controllability is
guaranteed by the particular choice of the control nodes. A more general choice of the
control nodes leads to our controllability problem.

The contribution of the paper is threefold. First, we identify a mathematical
framework, namely the controllability and observability of linear time invariant
systems induced by the Laplacian of a grid graph, that has numerous applications
in several engineering areas. In particular, we highlight how this framework
appears in distributed control, quantum computation and discretized partial
differential equations.

Second, we characterize the structure of the Laplacian eigenvectors of a
grid. Namely, we show that, on the basis of a prime number factorization of the
grid dimensions, the eigenvector components present symmetries related to
suitable partitions of the main grid into sub-grids that we call
\emph{bricks}. Given a partition of the grid graph into bricks, we show that the
eigenvalues of the elementary brick are also eigenvalues of the main grid. Also, the grid
eigenvectors associated to the common eigenvalues are obtained by composing (with
suitable flip operations) the corresponding eigenvectors of the basic brick.
Furthermore, we show that in each brick (and thus also in the main grid) the
eigenvector components may show symmetries with respect to one or more of the
grid axes. 


Third and final, we provide necessary and sufficient conditions to completely
characterize the controllability and observability of grid graphs. We start showing
that loss of controllability and observability can be studied by identifying all
the zero components of an eigenvector. Based on the evaluation of suitable sets
of polynomials, together with the eigenvector symmetries, we are able to determine
all and only the eigenvector components that can be set to zero
simultaneously. Thus, on the basis of the node labels, the eigenvector
symmetries and the polynomial evaluations, we provide easily implementable
routines to: (i) identify all and only the controllable (observable) nodes of the
graph, (ii) say if the graph is controllable (observable) from a given set of nodes
and (iii) construct a set of control (observation) nodes from which the graph is
controllable (observable).



The paper is organized as follows. In Section~\ref{sec:prelims} we introduce
preliminary definitions and properties of undirected graphs, set up the
controllability and observability problems and describe the motivating scenarios
for our framework.
In Section~\ref{sec:simple_grids} we characterize the controllability and
observability for grid graphs with simple eigenvalues.
  %
In Section~\ref{sec:general_grids} we analyze the symmetries in the structure of
the grid graph eigenvectors. On this basis, in
Section~\ref{sec:general_grids_observ} we provide necessary and sufficient
conditions for the controllability (observability) of general grid graphs.
Finally, in Appendix we recall results from \cite{GP-GN:12} on the
controllability (observability) of path graphs.

\paragraph*{Notation}
Let $\natural$ denote the natural numbers, for $i\in\natural$ we let $e_i$ be
the $i$-th element of the canonical basis, e.g. $e_1 = [1~ 0~ \ldots~ 0]^T$. For
a vector $v\in\real^d$ we denote $(v)_i$ the $i$th component of $v$ so that $v =
[(v)_1 \ldots (v)_d]^T$. We denote $\Pi \in \real^{d\times d}$ the
permutation matrix reversing all the components of $v$ so that $\Pi v = [(v)_d
\ldots (v)_1]^T$ (the $j$-th column of $\Pi$ is $[\Pi]_j= e_{n-j+1}$).
Adopting the usual terminology of number theory, we say that $k$ is a
\emph{factor} of $m$ if there is an integer $q$ such that $m=kq$. Given two
integers $b$ and $c$, if an integer $m$ is a factor of $b-c$, we write $b
\eqmod{m} c$.
%
%
We denote $GCD(a,b)$ the \emph{greatest common divisor} of two positive integers
$a$ and $b$.

\section{Problem set-up and motivations}
\label{sec:prelims}
In this section we present some preliminary terminology on graph theory,
introduce the network model, set up the controllability and observability problems
and provide some standard results for linear systems that will be useful to
prove the main results of the paper.

\subsection{Preliminaries on graph theory}
Let $G = (I, E)$ be a static undirected graph with set of nodes $I=\until{n}$
and set of edges $E\subset I\times I$. We denote $\NN_i$ the set of neighbors of
agent $i$, that is, $\NN_i = \setdef{j\in I}{(i,j)\in E}$, and $\degree_i =
\sum_{j\in \NN_i} 1$ the degree of node $i$.
The degree matrix $D$ of the graph $G$ is the diagonal matrix defined as
$[D]_{ii} = \degree_i$.
The adjacency matrix $A \in \real^{n\times n}$ associated to the graph $G$ is
defined as
\[
\left[ A \right]_{ij} =
\begin{cases}
  1 & \text{if}~ (i,j)\in E\\
  0 & \text{otherwise}.
\end{cases}
\]
The Laplacian $L$ of $G$ is defined as $L = D - A$. The Laplacian is a symmetric
positive semidefinite matrix with $k$ eigenvalues in $0$, where $k$ is the
number of connected components of $G$. If the graph is connected the eigenvector
associated to the eigenvalue $0$ is the vector $\mathbf{1} = [1~ \ldots~ 1]^T$.

Next, we introduce the notion of cartesian product of graphs. Let $G = (I,E)$
and $G'= (I', E')$ be two undirected graphs. The cartesian product $G \, \Box \,
G'$ is a graph with vertex set $I\times I'$ (i.e. the cartesian product of the
two vertex sets) and edge set defined as follows. Nodes $[i,i']\in I\times I'$
and $[k,k']\in I\times I'$ are adjacent in $G \, \Box \, G'$ if either $i=k$ and
$(i',k')\in E'$ or $i'=k'$ and $(i,k)\in E$. The cartesian product is
commutative and associative. Thus, a $d\in\natural$ dimensional product graph,
$\prod_{\ell=1}^{d} G_\ell$, is constructed by combining the above definition
with the associative property.


We introduce the special graphs that will be of interest in the rest of the
paper.
%
A \emph{path graph} is a graph in which there are only nodes of degree two
except for two nodes of degree one.
The nodes of degree one are called external nodes, while the other are called
internal nodes.
From now on, without loss of generality, we will label the external nodes with
$1$ and $n$, and the internal nodes so that the edge set is $E = \{(i,i+1) \; |
\; i\in\fromto{1}{n-1}\}$.

%
%
A $d$-dimensional \emph{grid graph} (or \emph{lattice graph}) is the cartesian
product of $d$ paths (of possibly different length). In a grid graphs the nodes
have degree from $d$ up to $2d$. We call the nodes with degree $d$ \emph{corner
  nodes}. Corner nodes are obtained from the product of external nodes in the
paths.
%
%

Given a $d$-dimensional grid
graph $G = P_1 \Box \ldots \Box P_d$, we denote $i = [(i)_1, \ldots, (i)_d]$ a node
of $G$, where the component $(i)_\kappa$ identifies the position of the node on
the $\kappa$th path. Also, given a Laplacian eigenvector of the $G$, $w \in
\real^{n_1 \ldots n_d}$, we say ``the component $[(i)_1, \ldots, (i)_d]$ of $w$''
meaning ``the component $(i)_1 \cdot (n_1 \cdot n_2 \cdot \ldots \cdot n_d) +
(i)_2 \cdot (n_2 \cdot \ldots \cdot n_d) \ldots + (i)_d$ of $w$''.

\subsection{Controllability and observability of graph induced systems: problem
  set-up and analysis tools}
Next, we introduce the class of systems that we investigate in the
paper. Informally, we consider linear time invariant systems whose state matrix
is the Laplacian of a grid graph, the input matrix is obtained by directly
controlling a subset of the node dynamics and the output matrix by observing a
subset of the node states. Formally, let $G=(I=\until{n},E)$ be a grid graph, $I_c =
\{i_1, \ldots, i_m\}\subset I$ and $I_o = \{j_1, \ldots, j_p\}\subset I$, a
first order dynamical system induced by $G$, $I_c$ and $I_o$ is the system
\begin{equation}
  \begin{split}
  \dot{x}(t) &= \alpha L x(t) + B u(t),\\
   y(t) &= C x(t),
\end{split}
  \label{eq:induced_system}
\end{equation}
where $\alpha$ is a scalar, $L$ is the Laplacian of $G$, 
  $B =
  \begin{bmatrix}
    e_{i_1} \; \vline & \ldots & \vline \; e_{i_{m}}
  \end{bmatrix}$ and
$C =
  \begin{bmatrix}
    e_{i_1} \; \vline & \ldots & \vline \; e_{i_{p}}
  \end{bmatrix}^T$.

%
It is a well known result in linear systems theory that the observability
properties of the state-output pair $(L, C)$ correspond to the controllability
properties of the state-input pair $(L^T, C^T) = (L, C^T)$. Thus, the
controllability and observability analysis for the class of systems in
\eqref{eq:induced_system} can be performed by using the same tools.

We start with some notation. The set of states that are controllable is the \emph{controllable
  subspace} and will be denoted $X_c$. Respectively, the set of initial states
that produce an identically zero output is the \emph{unobservable subspace} and
will be denoted $X_{no}$.

An important result on the controllability (observability) of
time-invariant linear systems is the Popov-Belevitch-Hautus (PBH) lemma, e.g. \cite{PJA-ANM:94}.
Combining the PBH lemma with the fact that the state matrix is symmetric
(therefore diagonalizable) the following lemma follows.

\begin{lemma}[PBH lemma for symmetric matrices]
  Let $A\in \real^{n\times n}$, $B \in \real^{n\times m}$ and
  $C\in\real^{p\times n}$, $n,m,p \in \natural$, be the state, input and output
  matrices of a linear time-invariant system, where $A$ is symmetric. Then, the
  unobservable subspace $X_{no}$ associated to the pair $(A, C)$ (respectively
  the orthogonal complement to the controllable subspace $X_c$ associated to the
  pair $(A,B)$) is spanned by vectors $v_l$ satisfying for some
  $\lambda\in\real$
  \begin{equation}
    \begin{matrix}
      B^T v_l &= 0\\
      A v_l &= \lambda v_l,
    \end{matrix}
    \quad\text{respectively}\quad
   \begin{matrix}
      C v_l &= 0\\
      A v_l &= \lambda v_l.
    \end{matrix}
    \label{eq:PBH_eigvec_reach_obs}
  \end{equation}
  That is, the basis vectors of $X_{no}$ ($X_c^\perp$) are the eigenvectors of
  $A$ with zero in the $j_1$-th, $\ldots$, $j_p$-th ($i_1$-th, $\ldots$, $i_m$-th)
  components. \oprocend
  \label{lmm:PBH_eigvec}
\end{lemma}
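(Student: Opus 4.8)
The plan is to derive this lemma as a direct consequence of the classical PBH (Popov-Belevitch-Hautus) eigenvector test, specialized to the symmetric case. I would first recall the standard PBH statement: the pair $(A,C)$ is unobservable if and only if there exists $\lambda \in \mathbb{C}$ and a nonzero vector $v$ with $Av = \lambda v$ and $Cv = 0$; more precisely, the unobservable subspace $X_{no}$ is $A$-invariant and admits a basis of eigenvectors satisfying $Cv = 0$. Dually, the orthogonal complement of the controllable subspace, $X_c^\perp$, consists of eigenvectors $v$ of $A^T$ satisfying $B^T v = 0$. The only real work is to use symmetry of $A$ to replace eigenvectors of $A^T$ by eigenvectors of $A$ and, crucially, to guarantee that $X_{no}$ (respectively $X_c^\perp$) is genuinely spanned by eigenvectors of $A$ rather than by a more general $A$-invariant collection.

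The key steps, in order, are as follows. First I would invoke the spectral theorem: since $A \in \mathbb{R}^{n\times n}$ is symmetric, it is orthogonally diagonalizable with real eigenvalues, so $\mathbb{R}^n$ decomposes as an orthogonal direct sum of the eigenspaces $E_\lambda$ of $A$. Second, I would show that $X_{no}$ is $A$-invariant: if $v$ produces identically zero output, i.e. $CA^k v = 0$ for all $k \geq 0$, then so does $Av$, whence $Av \in X_{no}$. Third, I would combine invariance with symmetry. Because $A$ restricted to the invariant subspace $X_{no}$ is again a symmetric operator (symmetry is inherited on $A$-invariant subspaces under the standard inner product), $X_{no}$ itself has an orthonormal basis of eigenvectors of $A$. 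Each such basis vector $v_l$ lies in $X_{no}$ and hence satisfies $Cv_l = 0$ while also being a genuine eigenvector $Av_l = \lambda v_l$ for some real $\lambda$. This establishes the first half of \eqref{eq:PBH_eigvec_reach_obs}.

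Fourth, I would handle the controllability half by duality. Using $A^T = A$, the orthogonal complement $X_c^\perp$ of the controllable subspace coincides with the unobservable subspace of the pair $(A, B^T)$, so the identical argument applied with $C$ replaced by $B^T$ yields a basis of eigenvectors $v_l$ of $A$ with $B^T v_l = 0$ and $A v_l = \lambda v_l$. Finally, I would unwind what the conditions $C v_l = 0$ and $B^T v_l = 0$ mean in terms of the specific structure of $B$ and $C$ in system~\eqref{eq:induced_system}: since the columns of $B$ are the canonical basis vectors $e_{i_1}, \ldots, e_{i_m}$ and the rows of $C$ are $e_{j_1}^T, \ldots, e_{j_p}^T$, the equation $C v_l = 0$ says exactly that the $j_1$-th through $j_p$-th components of $v_l$ vanish, and $B^T v_l = 0$ says the $i_1$-th through $i_m$-th components vanish. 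This gives the concluding sentence of the statement.

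The main obstacle is the third step: one must be careful to argue that $X_{no}$ is spanned by honest eigenvectors and not merely by a Jordan-type generalized eigenstructure. In the general (nonsymmetric) PBH setting the unobservable subspace need only be $A$-invariant, and a basis of eigenvectors is not guaranteed; it is precisely the symmetry (hence diagonalizability together with orthogonality of the eigenspace decomposition) that forces the restriction of $A$ to the invariant subspace $X_{no}$ to remain diagonalizable, so that an eigenvector basis exists inside $X_{no}$. I would make this rigorous either by intersecting each eigenspace $E_\lambda$ with $X_{no}$ and checking these intersections sum to $X_{no}$, or by invoking that a symmetric operator restricted to an invariant subspace is again symmetric and therefore diagonalizable. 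Everything else is routine bookkeeping once this point is secured.
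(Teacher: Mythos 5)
Your proposal is correct and follows essentially the same route the paper intends: the paper gives no explicit proof, stating only that the lemma ``follows'' by combining the classical PBH lemma with the diagonalizability of the symmetric state matrix, and your argument (spectral theorem, $A$-invariance of $X_{no}$, inherited symmetry on the invariant subspace, duality via $A^T=A$, and unwinding the canonical-basis structure of $B$ and $C$) is exactly that reasoning with the details filled in. Note also that you pair the conditions correctly ($Cv_l=0$ for $X_{no}$, $B^Tv_l=0$ for $X_c^\perp$), consistent with the lemma's concluding sentence, whereas the displayed equation \eqref{eq:PBH_eigvec_reach_obs} in the paper swaps the two conditions relative to its ``respectively''---an apparent typo in the statement.
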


In the rest of the paper we will denote the eigenvalues and eigenvectors for
which \eqref{eq:PBH_eigvec_reach_obs} holds \emph{uncontrollable} (respectively,
\emph{unobservable}) \emph{eigenvalues and eigenvectors}.

\begin{remark}[Higher order integrators]
  The controllability (observability)
  analysis for $k$-th order integrators of the form $x^{(k)}(t) = \alpha L x(t) + B u(t)$ is
  equivalent to the analysis of the first order system
  \eqref{eq:induced_system}. The statement follows, for example, 
  by writing the conditions in Lemma~\ref{lmm:PBH_eigvec} for the $k$-th order integrator system and exploiting its
  block structure. \oprocend
\end{remark}

\subsection{Motivating applications}
Next, we show three main areas of application for our results.

\subsubsection*{\bf Network of agents running average consensus}
We consider a collection of agents labeled by a set of identifiers $I =
\until{n}$, where $n\in\natural$ is the number of agents. We assume that the
agents communicate according to a \emph{time-invariant undirected} communication
graph $G = (I, E)$, where $E = \setdef{(i,j)\in I\times I}{i~ \text{and}~ j~
  \text{communicate}}$. 
The agents run a consensus algorithm based on a Laplacian control law (see
e.g. \cite{ROS-JAF-RMM:07} for a survey). The dynamics of the agents evolve in
continuous time ($t\in\real_{\geq0}$) and are given by
\begin{equation*}
  \begin{split}
    \dot{x}_i(t) = - \sum_{j\in \NN_i} (x_i(t) - x_j(t)), ~ i \in \until{n}.
  \end{split}
\end{equation*}
For the controllability analysis, we consider a scenario in which some of the nodes
have the possibility to apply an additional input that fully controls its
dynamics. We call these nodes \emph{control nodes}. This turns to be the model
of a leader-follower network.
For the observability analysis, we imagine that an external processor (\emph{not} running the consensus
algorithm) collects information from some nodes in the network. We call these
nodes \emph{observation nodes}. In particular, we assume that the external
processor may read the state of each observation node.
Equivalently, we can think of one or more observation nodes, running the
consensus algorithm, that have to reconstruct the state of the network by
processing only their own state.
These two scenarios are captured by the model in equation \eqref{eq:induced_system}.


\begin{remark}[Equivalence with other problem set-ups]
  Straightforward results from linear system theory can be used to prove that
  the controllability problem studied in \cite{AR-MJ-MM-ME:09} and
  \cite{SM-ME-AB:09} and the dual observability problem studied in
  \cite{MJ-ME:07} can be equivalently formulated in our set up. \oprocend
\end{remark}

\bigskip

\subsubsection*{\bf Continuous time quantum and random walks}
Dynamic systems induced by the Laplacian of a graph appear also in dealing with
quantum and random walks \cite{HG-JW:03}. We concentrate on the quantum
counterpart of random walks, which have recently received great attention in the
area of (quantum) information theory. The general idea of quantum information
and computation is to solve common problems in information theory by using
axioms and rules derived from quantum theory. Specifically, quantum walks are a
computational variant of random walks in which the transition probability among
the states follows quantum laws as opposed to standard stochastic
laws. Formally, for quantum mechanical systems which are closed (i.e., not
interacting with the environment) and finite dimensional, one considers the
Schr\"odinger equation
\[
 i \hbar \dot{\psi}(t) = H(u(t)) \psi(t),
\]
where $\psi \in \CC^n$ is the quantum state and the Hamiltonian matrix $H(u)$ is
Hermitian and depends on a control $u(t)$. Continuous time quantum walks are
quantum systems whose dynamics is defined on a graph $G$. Specifically, the
Hamiltonian has the form
\[
  H(u) = H_0 + \sum_{j=i_1}^{i_m} e_j e_j^T u_j,
\]
where $H_0$ is the adjacency matrix or the Laplacian of a given graph $G$. The
connection with our results appears for quantum walks on a grid (or lattice) graph with $H_0$
being the grid Laplacian. The resulting dynamics is
\[
 i \hbar \dot{\psi}(t) = (L + \sum_{j=i_1}^{i_m} e_j e_j^T u_j(t)) \psi(t).
\]
In \cite{DB-DD-LH-SS-MY:11} it is shown that the controllability of the above system, expressed by
a Lie algebra rank condition, is equivalent to the controllability of the
linear system \eqref{eq:induced_system}. Our analysis is strictly related to the
line pursued in \cite{DB-DD-LH-SS-MY:11} of finding more easily verifiable graph theoretic tests.


 \bigskip

\subsubsection*{\bf Discretization of a class of partial differential equations}
Next, we show how the discretization of partial differential equations
containing the Laplace operator gives rise to an ordinary differential equation
whose controllability and observability can be studied by using the tools developed in the paper. 

Let $\map{f}{\real^k}{\real}$ be a twice differentiable real valued function,
then the Laplace operator of $f$ is $\Delta f := \sum_{i=1}^k \frac{\partial^2 f(x)}{\partial x^2_i}$.
%
This operator has a key importance in several physical phenomena. In particular,
it appears in the heat and fluid flow diffusion, in wave
propagation and quantum mechanics. Specifically, the density (temperature)
fluctuations of diffusing material (heat) are described by the partial differential equation
\[
\frac{\partial\phi(x,t)}{\partial t} = D \Delta \phi(x,t) + f(x,t),
\]
where $\phi(x,t)$ is the density of the diffusing material (respectively the temperature) at location
$x\in\real^k$ and time $t$, $D$ is the diffusion coefficient (respectively the
thermal diffusivity) and $f(x,t)$ is a material (heat) source. The wave propagation, arising in acoustics,
electromagnetism and fluid dynamics, is described by the partial differential
equation
\[
\frac{\partial^2\phi(x,t)}{\partial t^2} = c^2 \Delta \phi(x,t) + f(x,t),
\]
with $\phi(x,t)$ the wave amplitude at $x\in\real^k$ and time $t$,
$c$ a constant, and $f(x,t)$ a forcing term.


If we consider a regular discretization of a $d=2$ dimensional hyper-rectangular
domain, e.g. $[x_1^{min}, x_1^{max}]\times [x_2^{min}, x_2^{max}]$,
for $i\in \until{n_1}$ and $j\in \until{n_2}$, $x_{[i,j]} = [x_1^{min} + i h,
x_2^{min} + j h]$, where $h$ is the discretization step, then 
the discretization of the Laplacian operator becomes
\[
\Delta(\phi(x_{[i,j]},t)) \approx  [- L \, \phi^d(t)]_{[i,j]},
\]
where $L$ is the Laplacian of an $n_1\times n_2$ grid graph and the vector
$\phi^d(t)\in\real^{n_1\cdot n_2}$ has components ${\phi^d}_{[i,j]}(t) = \phi(x_{[i,j]},t)$.
%
%
%
With this approximation in hands, the discretized versions of the partial
differential equations above are ordinary differential equations
with the same structure as in \eqref{eq:induced_system}.

\section{Controllability and observability of simple eigenvalues in grid graphs}
\label{sec:simple_grids}
In this section we characterize the controllability and observability properties of
the simple eigenvalues of the grid, namely the eigenvalues of multiplicity one.

\subsection{Laplacian eigenstructure of cartesian-product graphs}
An important property of graphs obtained as the cartesian product of other
graphs is that the Laplacian can be obtained from the Laplacian of their
constitutive graphs by using the Kronecker product of two matrices, see
\cite{RM:98}. Given two matrices $A\in\real^{d\times d}$ and $B\in\real^{l
  \times l}$, with $[A]_{ij}:= a_{ij}$, their Kronecker product $A \otimes B \in
\real^{dl\times dl}$ is defined as
\[
A \otimes B =
\begin{bmatrix}
  a_{11} B  &a_{12} B  &\ldots &a_{1d} B\\
  a_{21} B  &a_{22} B  &\ldots &a_{2d} B\\
  \vdots    &    \cdots&       &  \vdots\\
  a_{d1} B  &a_{12} B  &\ldots &a_{dd} B\\
\end{bmatrix},
\]
and their Kronecker sum as
\[
A \oplus B = A \otimes I_l + I_d \otimes B.
\]
Given the cartesian product of the graphs $G_1, \ldots, G_d$ with Laplacian
matrices $L_1, \ldots, L_d$, the Laplacian $L_\Box$ of $G_1 \,\Box\, \ldots
\,\Box\, G_d$ is given by
\[
L_\Box = L_1 \oplus \ldots \oplus L_d.
\]
This structure on the Laplacian induces a structure also on its eigenvalues and
eigenvectors. We state it in the next lemma, see \cite{RM:98}.

\begin{lemma}[Laplacian eigenstructure of cartesian product graphs]
  \label{lmm:eigstruc_cartesian_prod}
  Let $G_1, \ldots, G_d$ be $d\in\natural$ undirected graphs and $G = G_1\Box
  \ldots \Box G_d$ their cartesian product. Let $\lambda_1^\kappa, \ldots,
  \lambda_{n_\kappa}^\kappa$ be the Laplacian eigenvalues of the graphs
  $G_\kappa$ and $v_1^\kappa, \ldots, v_{n_\kappa}^\kappa$ the corresponding
  eigenvectors for $\kappa\in\until{d}$. The Laplacian eigenvalues and the
  corresponding eigenvectors of $G$ are
  \[
  \begin{split}
    \lambda^1_{i_1} + \lambda^2_{i_2}& + \ldots + \lambda^d_{i_d} \qquad
    \text{and} \qquad  v^1_{i_1} \otimes v^2_{i_2} \otimes \ldots \otimes
    v^d_{i_d}\\
  \end{split}
  \]
  for $i_1 \in\until{n_1}, \ldots, i_d \in\until{n_d}$. \oprocend
\end{lemma}

We are now ready to define a simple cartesian product graph.
\begin{definition}[Simple cartesian-product graphs]
  Let $G$ and $G'$ be two undirected graphs and let $\{\lambda_1, \ldots,
  \lambda_{k}\}$ and $\{\lambda'_1, \ldots, \lambda'_{\kappa}\}$ be the sets of
  distinct eigenvalues among all the Laplacian eigenvalues of respectively $G$
  and $G'$. We say that the graph $G_{\Box} = G \, \Box \, G'$ is \emph{simple} if the
  set $\{\lambda_i + \lambda'_\alpha \;|\; i\in\until{k},
  \alpha\in\until{\kappa}\}$ contains only distinct eigenvalues. \oprocend
\end{definition}

Using the associative property of the cartesian product the definition easily
generalizes to the product of more than two graphs.




\subsection{Controllability and observability of the simple eigenvalues}
\label{subsec:grid}

We start with a lemma that relates the controllability (observability) of simple eigenvalues of a
grid from a single node to the controllability (observability) of its constitutive paths.

\begin{lemma}
  \label{lmm:unobs_siso}
  Let $P_1, \ldots, P_d$, $d\in\natural$, be path graphs of length respectively
  $n_1,\ldots, n_d$ and let $G = P_1\Box \ldots \Box P_d$. Any simple eigenvalue
  $\lambda = \lambda_1 + \ldots + \lambda_d$ of the grid graph $G$ is not
  controllable (observable) from a node $[(i)_1, \ldots, (i)_d]$, $(i)_1\in\until{n_1}$,
  $\ldots$ , $(i)_d\in\until{n_d}$, if and only if there exists $\ell \in
  \until{d}$ such that the eigenvalue $\lambda_\ell$ of $P_\ell$ is not
  controllable (observable) from $(i)_\ell$.
\end{lemma}

\begin{proof}
  From Lemma~\ref{lmm:PBH_eigvec} the
  eigenvalue $\lambda$ of the grid graph is not controllable (observable) from a node $[(i)_1,
  \ldots, (i)_d]$ if and only if a Laplacian eigenvector $w\in\real^{n_1 \ldots
    n_d}$ of $\lambda$ has zero $[(i)_1, \ldots, (i)_d]$ component.
  Using Lemma~\ref{lmm:eigstruc_cartesian_prod} and the assumption that $\lambda$ is a simple
  eigenvalue, any eigenvector $w$ of $\lambda$ can be written as $w = \rho \, v_1 \otimes \ldots
  \otimes v_d$, where
  $\rho\in\real$, $\rho\neq0$, and $v_1, \ldots v_d$ are
   eigenvectors of the constitutive paths.
   Using the structure of the Kronecker product of $d$ vectors, the $[(i)_1,
   \ldots, (i)_d]$ component of $w$ is zero if and only if at least one
   $v_\ell$, $\ell\in\until{d}$, has zero $(i)_\ell$ component. Indeed,
   $(w)_{[(i)_1, \ldots, (i)_d]} = \rho (v_1)_{i_1}\cdot \ldots \cdot
   (v_d)_{i_d}$ which is zero if and only if there exists $\ell\in\until{d}$
   such that $(v_\ell)_{i_\ell}=0$. From Lemma~\ref{lmm:PBH_eigvec}
   $(v_\ell)_{i_\ell}=0$ if and only if the eigenvalue $\lambda_{\ell}$ of
   $P_\ell$ is not controllable (observable) from $(i)_\ell$, thus concluding the proof.
\end{proof}

From the previous lemma and Lemma~\ref{lmm:PBH_eigvec} the next proposition
follows straight.
\begin{proposition}
  A simple grid $G=P_1\Box\ldots\Box P_d$ is controllable (observable) from a node $i=[(i)_1, \ldots, (i)_d]$ if and only
  if each path $P_\ell$ is controllable (observable) from node $(i)_\ell$.\oprocend
\end{proposition}

Using the property that any path graph is controllable (observable) from each external node,
\cite{GP-GN:12}, the next corollary follows. 
\begin{corollary}
  Any simple eigenvalue of a grid graph is controllable (observable) from any
  corner node.\oprocend
\end{corollary}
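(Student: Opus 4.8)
The plan is to invoke Lemma~\ref{lmm:unobs_siso} in its contrapositive form, combined with the known controllability of path graphs from their external nodes. First I would fix an arbitrary \emph{simple} eigenvalue $\lambda = \lambda_1 + \ldots + \lambda_d$ of the grid $G = P_1 \Box \ldots \Box P_d$ and an arbitrary corner node $i = [(i)_1, \ldots, (i)_d]$. By the definition given in Section~\ref{sec:prelims}, corner nodes are obtained as products of external nodes of the constitutive paths, so each coordinate $(i)_\ell$ is an external node of $P_\ell$, i.e. $(i)_\ell \in \{1, n_\ell\}$ for every $\ell \in \until{d}$.

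Next I would recall the result from \cite{GP-GN:12} that every path graph is controllable (observable) from each of its external nodes. By Lemma~\ref{lmm:PBH_eigvec}, controllability of a path from a node is equivalent to the absence of any eigenvector of that path with a zero component at the node; hence this property guarantees, in particular, that the single eigenvalue $\lambda_\ell$ of $P_\ell$ is controllable (observable) from the external node $(i)_\ell$, for each $\ell \in \until{d}$.

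Finally I would apply the contrapositive of Lemma~\ref{lmm:unobs_siso}: the simple eigenvalue $\lambda$ is controllable (observable) from $i$ precisely when there is \emph{no} index $\ell$ for which $\lambda_\ell$ fails to be controllable (observable) from $(i)_\ell$. The previous step establishes exactly that no such $\ell$ exists, so $\lambda$ is controllable (observable) from the corner node $i$, as claimed. Since $\lambda$ and $i$ were arbitrary, the corollary follows.

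I do not expect any genuine obstacle, as the statement is a direct specialization of Lemma~\ref{lmm:unobs_siso} to the case of corner nodes. The only point requiring some care is the quantifier bookkeeping: one must ensure that controllability of the \emph{entire} path from an external node indeed yields controllability of the specific individual eigenvalue $\lambda_\ell$ (rather than only a collective statement about the path spectrum), so that the ``for all $\ell$'' hypothesis in the contrapositive of Lemma~\ref{lmm:unobs_siso} is met for the particular eigenvalue $\lambda$ under consideration.
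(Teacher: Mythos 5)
Your proposal is correct and follows essentially the same route as the paper: the paper also derives this corollary by combining Lemma~\ref{lmm:unobs_siso} (equivalently, the preceding proposition) with the fact from \cite{GP-GN:12} that every path graph is controllable (observable) from its external nodes, which is exactly your contrapositive argument. Your care about the quantifiers is well placed but unproblematic, since path controllability from a node means, via Lemma~\ref{lmm:PBH_eigvec}, that no path eigenvector vanishes at that node, hence every individual eigenvalue $\lambda_\ell$ is controllable (observable) from it.
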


We are now ready to characterize the controllability (observability) of simple grid graphs.

\begin{theorem}[Simple grid controllability (observability)]
  \label{thm:main_thm_simple_grid}
  Let $P_1, \ldots, P_d$ be $d$ path graphs of length respectively $n_1,\ldots,
  n_d$ and let $G = P_1\Box \ldots \Box P_d$ be a simple grid. The following
  statements hold.
  \begin{enumerate}
  \item The grid graph $G$ is not controllable (observable) from a node $i=[(i)_1, \ldots,
    (i)_d]$, $(i)_1\in\until{n_1}$, $\ldots$ , $(i)_d\in\until{n_d}$, if and
    only if there exists $\ell \in \until{d}$ such that
    \begin{equation}
      \big(n_\ell-(i)_\ell\big)\; \eqmod{p} \;\big((i)_\ell-1\big),
      \label{eq:unobs_siso}
    \end{equation}
    for some odd prime $p$ dividing $n_\ell$;
  \item for any direction $\ell\in\until{d}$ of $G$ the following holds. For
    each odd prime factor $p$ of $n_\ell$, the grid is not controllable (observable) from the
    set of nodes $I_s^\ell = \{ i=[(i)_1, \ldots, (i)_d]\in I \;|\;  (i)_\ell = j \,
  p-\frac{p-1}{2}, {j\in\until{\frac{n}{p}}}, \;\text{and}\; (i)_1, \ldots,(i)_{\ell-1},
    (i)_{\ell+1}, \ldots, (i)_d \;\text{arbitrary}\}$, with the following uncontrollable (unobservable)
    eigenvalues
    \begin{equation}
      \begin{split}
        \lambda_{\nu,\ell} = 2-2\cos\left((2 \nu-1) \frac{\pi}{{p}}\right) +
        \lambda_1 + \ldots+\lambda_{\ell-1} + \lambda_{\ell+1} + \ldots
        +\lambda_d,
      \end{split}
      \label{eq:unobs_eigs_grid}
    \end{equation}
    and uncontrollable (unobservable) eigenvectors
    \begin{equation}\label{eq:unobs_eigvec_grid}
      \begin{split}
        w_{\nu,\ell}=u_1\otimes u_2\otimes .. \otimes u_{\ell-1}\otimes
        V_{\nu,\ell} \otimes  u_{\ell+1} ..  \otimes  u_{d}, \quad
         V_{\nu,\ell} \;\; \text{as in \eqref{eq:unobs_eigvecs_path}}
      \end{split}
    \end{equation}
    with $\nu\in\until{(p-1)/2}$, and $\lambda_\mu$, respectively $u_\mu$,
    $\mu\neq\ell$, any arbitrary eigenvalue, respectively eigenvector, of
    $P_\mu$;
  \item if a node $i=[(i)_1, \ldots, (i)_d]$ satisfies \eqref{eq:unobs_siso} for
    $r\leq d$ distinct directions and, in each direction $\ell$, for $k_\ell\leq
    n_\ell$ distinct prime factors, then the set of uncontrollable (unobservable) eigenvalues
     from node $i$ is the union of eigenvalues with the following structure
    \[
      \begin{split}
        \lambda_{\ell} = \bar{\lambda} +
        \lambda_1 + \ldots+\lambda_{\ell-1} + \lambda_{\ell+1} + \ldots
        +\lambda_d,
      \end{split}
    \]
    where each $\bar{\lambda}$ is an uncontrollable (unobservable) eigenvalue of
    path $P_\ell$ from $(i)_\ell$ and has the structure given in
    Theorem~\ref{thm:main_thm_path} (iv) and
    Remark~\ref{rmk:general_main_thm_path} in Appendix. The corresponding
    uncontrollable (unobservable) eigenvectors can be built according to equation
    \eqref{eq:unobs_eigvec_grid}.
  \oprocend
  \end{enumerate}
\end{theorem}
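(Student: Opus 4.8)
The plan is to reduce the multidimensional grid problem to the one-dimensional path problem via Lemma~\ref{lmm:unobs_siso}, and then invoke the known characterization of uncontrollable (unobservable) eigenvalues and eigenvectors for path graphs stated in Theorem~\ref{thm:main_thm_path} (iv) and Remark~\ref{rmk:general_main_thm_path} in the Appendix. The three parts of the statement are really three increasingly refined consequences of the same Kronecker-product decomposition, so I would organize the proof as three short arguments sharing a common setup.

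For part (i), the key observation is that by Lemma~\ref{lmm:unobs_siso} the simple eigenvalue $\lambda$ is not controllable (observable) from node $[(i)_1,\ldots,(i)_d]$ if and only if there exists a direction $\ell$ such that $\lambda_\ell$ is not controllable (observable) from $(i)_\ell$ in the path $P_\ell$. Since a simple grid has all simple eigenvalues, loss of controllability (observability) from node $i$ is therefore equivalent to the existence of some direction $\ell$ in which $(i)_\ell$ fails to control (observe) some eigenvalue of $P_\ell$. I would then substitute the path-graph criterion from the Appendix: the path $P_\ell$ of length $n_\ell$ is uncontrollable (unobservable) from $(i)_\ell$ precisely when the congruence $\big(n_\ell-(i)_\ell\big)\eqmod{p}\big((i)_\ell-1\big)$ holds for some odd prime $p$ dividing $n_\ell$. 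Combining these two equivalences yields condition \eqref{eq:unobs_siso} directly.

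For part (ii), I would fix a direction $\ell$ and an odd prime factor $p$ of $n_\ell$, and characterize the set of path positions $(i)_\ell$ satisfying the congruence of part (i). Solving $\big(n_\ell-(i)_\ell\big)\eqmod{p}\big((i)_\ell-1\big)$, i.e. $n_\ell+1\eqmod{p}2(i)_\ell$, and using that $p\mid n_\ell$, one finds $(i)_\ell \eqmod{p} \frac{p+1}{2}$, which enumerates exactly the positions $(i)_\ell = j\,p-\frac{p-1}{2}$ for $j\in\until{n_\ell/p}$; extending to the full grid with the remaining coordinates arbitrary gives the node set $I_s^\ell$. The associated uncontrollable (unobservable) path eigenvalues are the $2-2\cos\big((2\nu-1)\frac{\pi}{p}\big)$ coming from the Appendix, and the corresponding grid eigenvalues and eigenvectors follow from Lemma~\ref{lmm:eigstruc_cartesian_prod}: the eigenvalue is the sum $\lambda_{\nu,\ell}$ in \eqref{eq:unobs_eigs_grid} and the eigenvector is the Kronecker product $w_{\nu,\ell}$ in \eqref{eq:unobs_eigvec_grid}, with the factors in the other directions being arbitrary eigenpairs of $P_\mu$, $\mu\neq\ell$. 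I would verify that these $w_{\nu,\ell}$ indeed vanish on $I_s^\ell$ using the product structure $(w)_{[(i)_1,\ldots,(i)_d]}=\rho\,(v_1)_{(i)_1}\cdots(v_d)_{(i)_d}$ established in the proof of Lemma~\ref{lmm:unobs_siso}.

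For part (iii), I would assemble the general case by taking the union over all directions $\ell$ (there are $r$ such directions where the congruence holds) and over all $k_\ell$ distinct prime factors within each such direction. Each contributing uncontrollable (unobservable) eigenvalue of the grid has the form $\bar\lambda+\lambda_1+\ldots+\lambda_{\ell-1}+\lambda_{\ell+1}+\ldots+\lambda_d$, where $\bar\lambda$ is an uncontrollable (unobservable) path eigenvalue of $P_\ell$ from $(i)_\ell$ as classified in the Appendix; the eigenvectors are built exactly as in \eqref{eq:unobs_eigvec_grid}. The main obstacle I anticipate is bookkeeping rather than conceptual: in the general case the Appendix classification of path eigenvalues (Theorem~\ref{thm:main_thm_path} (iv) and Remark~\ref{rmk:general_main_thm_path}) may involve eigenvalues shared across several prime factors, so care is needed to state the collection of uncontrollable (unobservable) eigenvalues as a genuine union without double-counting or omission, and to ensure the simplicity hypothesis is invoked correctly when identifying eigenvectors with their eigenvalues. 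Once the path-level classification is quoted verbatim, however, the Kronecker-product lemma makes the lifting to the grid essentially mechanical.
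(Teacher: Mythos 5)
Your overall route is the same as the paper's: reduce to the path via Lemma~\ref{lmm:unobs_siso}, import the number-theoretic characterization from Theorem~\ref{thm:main_thm_path}, and lift eigenvalues and eigenvectors to the grid with Lemma~\ref{lmm:eigstruc_cartesian_prod}. Parts (i) and (iii) are handled essentially as in the paper (for (iii), the ``all and only'' nature of the claim is indeed covered by the if-and-only-if in Lemma~\ref{lmm:unobs_siso} applied at the single node $i$, as you implicitly use).

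There is, however, a genuine gap in your part (ii). You only prove one direction: that the eigenvectors $w_{\nu,\ell}$ of \eqref{eq:unobs_eigvec_grid} vanish on all of $I_s^\ell$, hence the eigenvalues \eqref{eq:unobs_eigs_grid} are uncontrollable (unobservable) from that set. But the statement, as the paper proves it, also asserts completeness: these are \emph{all} the uncontrollable (unobservable) eigenvalues and eigenvectors from $I_s^\ell$, a fact the subsequent controllability test (and the ``is the union'' phrasing of part (iii)) relies on. Completeness does not follow from the single-node Lemma~\ref{lmm:unobs_siso} alone, because uncontrollability from a \emph{set} requires a common eigenvector vanishing at every node of the set, and one must rule out other eigenvalues admitting such an eigenvector. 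The paper closes this by a short extra argument you omit: pick the nodes $[1,\ldots,1,(i)_\ell,1,\ldots,1]\in I_s^\ell$; since every path is controllable (observable) from its first node, the factors $(v_\mu)_1$, $\mu\neq\ell$, are nonzero, so any simple-grid eigenvector $\rho\, v_1\otimes\cdots\otimes v_d$ vanishing at all such nodes must have $(v_\ell)$ vanishing at every position $jp-\frac{p-1}{2}$, and Theorem~\ref{thm:main_thm_path} (iii) then forces $\lambda_\ell$ to be one of the eigenvalues in \eqref{eq:unobs_eigs_path}. Adding this step makes your argument match the paper's proof.
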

      
\begin{proof}
  The proof of statement (i) follows straight by combining the result of
  Lemma~\ref{lmm:unobs_siso} with the result in (i) of
  Theorem~\ref{thm:main_thm_path}.

  To prove statement (ii), we start observing that, for each $\ell\in\until{d}$,
  the set of nodes $I_s^\ell = \{ (i)_\ell \,
  p-\frac{p-1}{2}\}_{(i)_\ell\in\until{\frac{n}{p}}}$ is the set of all nodes
  satisfying condition in \eqref{eq:unobs_siso} for a given $p$ in the path
  $P_\ell$. Using Theorem~\ref{thm:main_thm_path}, we have that the uncontrollable
  (unobservable) eigenvalues of $P_\ell$ from this set of nodes have the form in
  \eqref{eq:unobs_eigs_path}. Now, according to
  Lemma~\ref{lmm:eigstruc_cartesian_prod} all the $\lambda_{\nu,\ell}$ as in
  \eqref{eq:unobs_eigs_grid} are eigenvalues of the grid. Also, the
  corresponding eigenvectors are the ones given in
  \eqref{eq:unobs_eigvec_grid}. Using the result in Lemma~\ref{lmm:unobs_siso},
  these eigenvectors have a zero in position $[(j)_1, \ldots, (j)_{\ell-1},
  (i)_\ell,(j)_{\ell+1}\ldots (j)_{d}]$ with $(i)_\ell$ satisfying
  \eqref{eq:unobs_siso} and $(j)_1, \ldots, (j)_{\ell-1}, (j)_{\ell+1}\ldots
  (j)_{d}$ arbitrary.
  To conclude the proof, we show that these are the only uncontrollable (unobservable)
  eigenvectors. To do that, we show that there exist nodes in $I_s^\ell$ for which the
  only zero component is $(i)_\ell$. For example, take any node with
  index $[1, \ldots,1,(i)_\ell,1\ldots 1]$ with $(i)_\ell$ satisfying
  \eqref{eq:unobs_siso}. Since any path is controllable (observable) from the first node, the
  proof follows.
  
  Statement (iii) follows straight by combining the results in the previous two
  points with Theorem~\ref{thm:main_thm_path}.
\end{proof}


Next, we show, on the basis of the results in Theorem~\ref{thm:main_thm_simple_grid},
how to check the controllability (observability) of a simple grid from a given set of nodes or,
equivalently, how to construct a set of control (observation) nodes such that the grid is
controllable (observable). For the sake of clarity we present the procedure for a two
dimensional grid ($d=2$), however the procedure can be easily generalized to
higher dimensions.

First, we introduce some notation. Given two sets $X = \union_{\nu=1}^k
[x_{1,\nu},x_{2,\nu}]$ and $Y = \union_{\nu=1}^l [y_{1,\mu},y_{2,\mu}]$ with
$[x_{1,\nu},x_{2,\nu}] \in \real\times\real$ and $[y_{1,\nu},y_{2,\nu}] \in
\real\times\real$, we say that $X \intersection Y \neq \emptyset$ if there
exists $[x_{1,\bar{\nu}},x_{2,\bar{\nu}}]\in X$ and
$[y_{1,\bar{\mu}},y_{2,\bar{\mu}}]\in Y$ such that
$[x_{1,\bar{\nu}},x_{2,\bar{\nu}}]=[y_{1,\bar{\mu}},y_{2,\bar{\mu}}]$,
i.e. $x_{1,\bar{\nu}}=y_{1,\bar{\mu}}$ and $x_{2,\bar{\nu}}=y_{2,\bar{\mu}}$.

Consider a two dimensional simple grid graph $G = P_1\Box P_2$ with $P_1$ and
$P_2$ of dimension $n_1$ and $n_2$ respectively. Let, for each $\ell\in\{1,2\}$,
$n_\ell = 2^{n_{\ell,0}} \prod_{\nu =1}^{k_\ell} p_{\ell,\nu}$ be a prime number
factorization for some $k_\ell\in \natural$ and odd prime numbers $p_{\ell,1},
\ldots, p_{\ell,k_\ell}$. Let $I_s = \{i_1, \ldots, i_m\}$ be a set of
control (observation) nodes with $i_\alpha = [(i_\alpha)_1, (i_\alpha)_2]$,
$\alpha\in\until{m}$. Now, we construct $m$ sets $O_1, \ldots, O_m$ that will be
used to define a simple rule for controllability (observability). For the sake of clarity we
provide the rule to construct a set $O_\alpha$ for a specific case. The general
case can be easily deduced from the example. Suppose that $i_j$ satisfies
condition \eqref{eq:unobs_siso} for $p_{1,1}$ and $p_{1,2}$ along direction $1$
and for $p_{2,3}$ along direction $2$. Now, define the set $O_\alpha$ as
follows.
\[
\begin{split}
  O_\alpha = [p_{1,1}, p_{2,1}] \union \ldots \union [p_{1,1}, p_{2,k_2}]\union [p_{1,2}, p_{2,1}] \union 
  \ldots \union [p_{1,2}, p_{2,k_2}] \union [p_{1,1}, p_{2,3}] \union \ldots
  \union[p_{1,k_1}, p_{2,3}].
\end{split}
\]
We call $O_1, \ldots, O_m$ a controllability (observability) partition of the set $I_s$.

The following proposition gives an easily implementable test for
controllability (observability). 
%
\begin{proposition}[Controllability (observability) test]
  Let $G = P_1 \Box P_2$ be a simple grid graph and $I_s$ a set of control (observation)
  nodes with controllability (observability) partition $O_1, \ldots, O_m$. Then $G$
  is controllable (observable) from $I_s$ if and only if $O_1 \intersection \ldots
  \intersection O_m = \emptyset$. \oprocend
\end{proposition}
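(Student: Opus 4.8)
The plan is to reduce the statement, via the PBH characterization in Lemma~\ref{lmm:PBH_eigvec}, to a purely combinatorial claim about prime factors. By Lemma~\ref{lmm:PBH_eigvec}, the grid $G$ fails to be controllable (observable) from $I_s$ if and only if some Laplacian eigenvector of $L$ vanishes simultaneously at all the nodes $i_1, \ldots, i_m$. Since $G$ is simple, every eigenvalue is simple, so by Lemma~\ref{lmm:eigstruc_cartesian_prod} the associated eigenvector is, up to scaling, a single Kronecker product $v_1 \otimes v_2$ with $v_1, v_2$ eigenvectors of $P_1, P_2$. By the factorization of the Kronecker product used in Lemma~\ref{lmm:unobs_siso}, such an eigenvector vanishes at a node $i_\alpha = [(i_\alpha)_1, (i_\alpha)_2]$ precisely when $(v_1)_{(i_\alpha)_1} = 0$ or $(v_2)_{(i_\alpha)_2} = 0$. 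Hence $G$ is uncontrollable from $I_s$ if and only if there exist eigenvalues $\lambda_1$ of $P_1$ and $\lambda_2$ of $P_2$ such that, for every $\alpha \in \{1, \ldots, m\}$, either $\lambda_1$ is uncontrollable from $(i_\alpha)_1$ in $P_1$ or $\lambda_2$ is uncontrollable from $(i_\alpha)_2$ in $P_2$.

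The second step is to encode the uncontrollable path eigenvalues by their prime labels. Using the path characterization (Theorem~\ref{thm:main_thm_path} and Remark~\ref{rmk:general_main_thm_path}), for each $\alpha$ I would let $S_1^\alpha$ (resp.\ $S_2^\alpha$) be the set of odd prime factors $p$ of $n_1$ (resp.\ $n_2$) for which $(i_\alpha)_1$ (resp.\ $(i_\alpha)_2$) satisfies condition~\eqref{eq:unobs_siso}; these are exactly the primes used to build $O_\alpha$, so that $O_\alpha = (S_1^\alpha \times \mathcal{P}_2) \cup (\mathcal{P}_1 \times S_2^\alpha)$, where $\mathcal{P}_\ell$ denotes the set of all odd prime factors of $n_\ell$. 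The key structural fact I would verify is that the single-prime eigenvalue $2-2\cos(\pi/p)$ of $P_\ell$ is uncontrollable from a node $j$ exactly when $j$ satisfies~\eqref{eq:unobs_siso} for $p$, i.e.\ exactly when $p \in S_\ell^\alpha$; this follows by matching the zero locations of the path eigenvector against the congruence $2j \equiv 1 \pmod p$ implied by~\eqref{eq:unobs_siso}. With this, for any odd primes $p_1 \mid n_1$, $p_2 \mid n_2$, the simple grid eigenvalue built from the corresponding single-prime eigenvectors is uncontrollable from $I_s$ if and only if for every $\alpha$ one has $p_1 \in S_1^\alpha$ or $p_2 \in S_2^\alpha$ -- which, unwinding the definition of $O_\alpha$, is precisely the statement $[p_1,p_2] \in \bigcap_{\alpha} O_\alpha$. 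This immediately yields the ``if'' direction: a nonempty intersection produces a single-prime pair $[p_1,p_2]$, and hence an eigenvector vanishing on all of $I_s$.

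For the ``only if'' direction the main obstacle is that an arbitrary common uncontrollable eigenvalue need not be of single-prime type: a path eigenvalue whose reduced index $k/n_\ell$ has an odd composite denominator $q$ vanishes only at the nodes that are bad for \emph{all} prime factors of $q$ simultaneously. I would resolve this with a monotonicity argument: the node set from which a composite-denominator eigenvalue of $P_\ell$ is uncontrollable is contained in the node set from which the single-prime eigenvalue for any one of those prime factors is uncontrollable. Consequently, if a pair $(\lambda_1,\lambda_2)$ witnesses uncontrollability from $I_s$, then selecting one odd prime factor from the reduced denominator of each of $\lambda_1,\lambda_2$ produces a single-prime pair $(p_1,p_2)$ that still witnesses it; the degenerate case $\lambda_\ell = 0$ (whose eigenvector never vanishes) is handled by letting the other direction cover all nodes and choosing $p_\ell$ arbitrarily. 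Combining this reduction with the equivalence of the previous paragraph shows that uncontrollability forces $\bigcap_\alpha O_\alpha \neq \emptyset$, and the observability case is identical since $L$ is symmetric. The most delicate bookkeeping will be the exact identification of each path eigenvector's zero set with~\eqref{eq:unobs_siso} and the checking of the degenerate cases against the set-theoretic definition of the $O_\alpha$.
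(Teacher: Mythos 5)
Your proposal is correct and follows essentially the same route as the paper: both reduce controllability from $I_s$, via the PBH lemma and the Kronecker-product structure of the simple grid eigenvectors, to the existence of a pair of path eigenvalues that together "cover" every node of $I_s$, and then encode that pair by odd prime factors so as to match the definition of the sets $O_\alpha$. Your explicit monotonicity argument for composite-denominator eigenvalues (including the degenerate never-vanishing cases) is the same reduction the paper performs by invoking statements (ii) and (iv) of Theorem~\ref{thm:main_thm_path}, just spelled out in greater detail.
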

\begin{proof}
  The grid is controllable (observable) from the set $I_s$ if and only if the
  intersection of the sets of uncontrollable (unobservable) eigenvalues from each
  node is empty (equivalently if the intersection of the uncontrollable
  (unobservable) subspaces is the zero vector). Using statement (iii) of
  Theorem~\ref{thm:main_thm_simple_grid} the uncontrollable (unobservable)
  eigenvalues from each node are the ones in equation
  \eqref{eq:unobs_eigs_grid}.
  Let $\lambda = \lambda_1 + \lambda_2$, with $\lambda_1$ eigenvalue of $P_1$
  and $\lambda_2$ eigenvalue of $P_2$, be a common eigenvalue to all the nodes
  in $I_s$.
  Using the property in (iii), a node $i = [(i)_1 (i)_2] \in I_s$ can have that
  uncontrollable (unobservable) eigenvalue either because it is uncontrollable
  (unobservable) along one of the two paths or along both. Equivalently either
  $\lambda_1$ is an uncontrollable (unobservable) eigenvalue for $P_1$ from node
  $(i)_1$ and/or $\lambda_2$ is an uncontrollable (unobservable) eigenvalue for
  $P_2$ from node $(i)_2$.

  Now, using the result in statement (ii) of Theorem~\ref{thm:main_thm_path},
  all the nodes in a control (observation) set $I_{o1}$ are uncontrollable
  (unobservable) if and only if all of them belong to a set $I_1^p=\{ \ell p -
  \frac{p-1}{2}\}_{\ell\in\until{\frac{n_1}{p}}}$ for some factor $p$ of $n_1$
  ($n_1$ being the length of $P_1$). The same argument holds for a control
  (observation) set $I_{o2}$ on the path $P_2$.
  This implies that the controllability (observability) on a path $P_i$ can be
  studied by checking if all the nodes in the control (observation) set $I_{oi}$
  share a common prime factor.

  Now, each point in the set $O_\alpha$, $\alpha\in\until{m}$, is of the form
  $[p_{1}, p_{2}]$ where $p_1$ and $p_2$ are prime factors of $n_1$ and $n_2$
  (the lengths of $P_1$ and $P_2$) respectively and at least one of the prime
  factors, say $p_1$, is such that $(i_\alpha)_1 \in I_1^{p_1}=\{ \ell p_1 -
  \frac{p_1-1}{2}\}_{\ell\in\until{\frac{n_1}{p_1}}}$. Thus, each element in the
  set $O_\alpha$ correspond to set of eigenvalues $\Lambda_\alpha =
  \big\{\lambda_{\nu_1,\nu_2}\in\real \;|\; \lambda_{\nu_1,\nu_2} =2-2\cos\left((2
    \nu_1-1) \frac{\pi}{{p_1}}\right) + 2-2\cos\left((2 \nu_2-1)
    \frac{\pi}{{p_2}}\right), \nu_1\in \until{\frac{n_1}{p_1}} \; \text{and} \;
  \nu_2\in \until{\frac{n_2}{p_2}}\big\}$ (with $(i_\alpha)_1 \in I_1^{p_1}$ or/and
  $(i_\alpha)_2 \in I_1^{p_2}$). The proof follows by observing that the
  controllability (observability) condition is that the intersection of the sets $\Lambda_\alpha$
  be empty.
\end{proof}

The following examples can be easily explained by using the proposition
above. If at least one of the control (observation) nodes, say $i_1$, is
controllable (observable) in any direction, then the grid is controllable
(observable). Indeed, the set $O_1$ will be empty. If all $n_\ell$,
$\ell\in\until{d}$, are prime, then the grid is controllable (observable) if and
only if one of the control (observation) nodes is controllable (observable) in any
direction. Indeed, any $O_\alpha$, $\alpha\in\until{m}$, can be either $O_\alpha
= \{[n_1, \ldots, n_d]\}$ or $O_\alpha = \emptyset$.

Next, we show a graphical interpretation of the controllability (observability) test based on the
controllability (observability) partition. We present it through an example. In
Figure~\ref{fig:grid_7x15} we show a two dimensional grid of length $7\times
15$. It can be easily tested that this grid is simple. In each direction
$\ell\in\{1,2\}$, for each prime factor of $n_l$ we associate a unique symbol to
the rows (columns) of nodes that satisfy \eqref{eq:unobs_siso} for that prime
number (in that direction). In particular, for the grid in
Figure~\ref{fig:grid_7x15}, we associate a cross to the columns satisfying
\eqref{eq:unobs_siso} for the prime factor $5$ of $15$, a triangle to the columns
satisfying \eqref{eq:unobs_siso} for the prime factor $3$ of $15$, and a
pentagon to the unique row satisfying \eqref{eq:unobs_siso} for the prime number
$7$.

\begin{figure}[htbp]
  \centering
  \includegraphics[height=.4\linewidth]{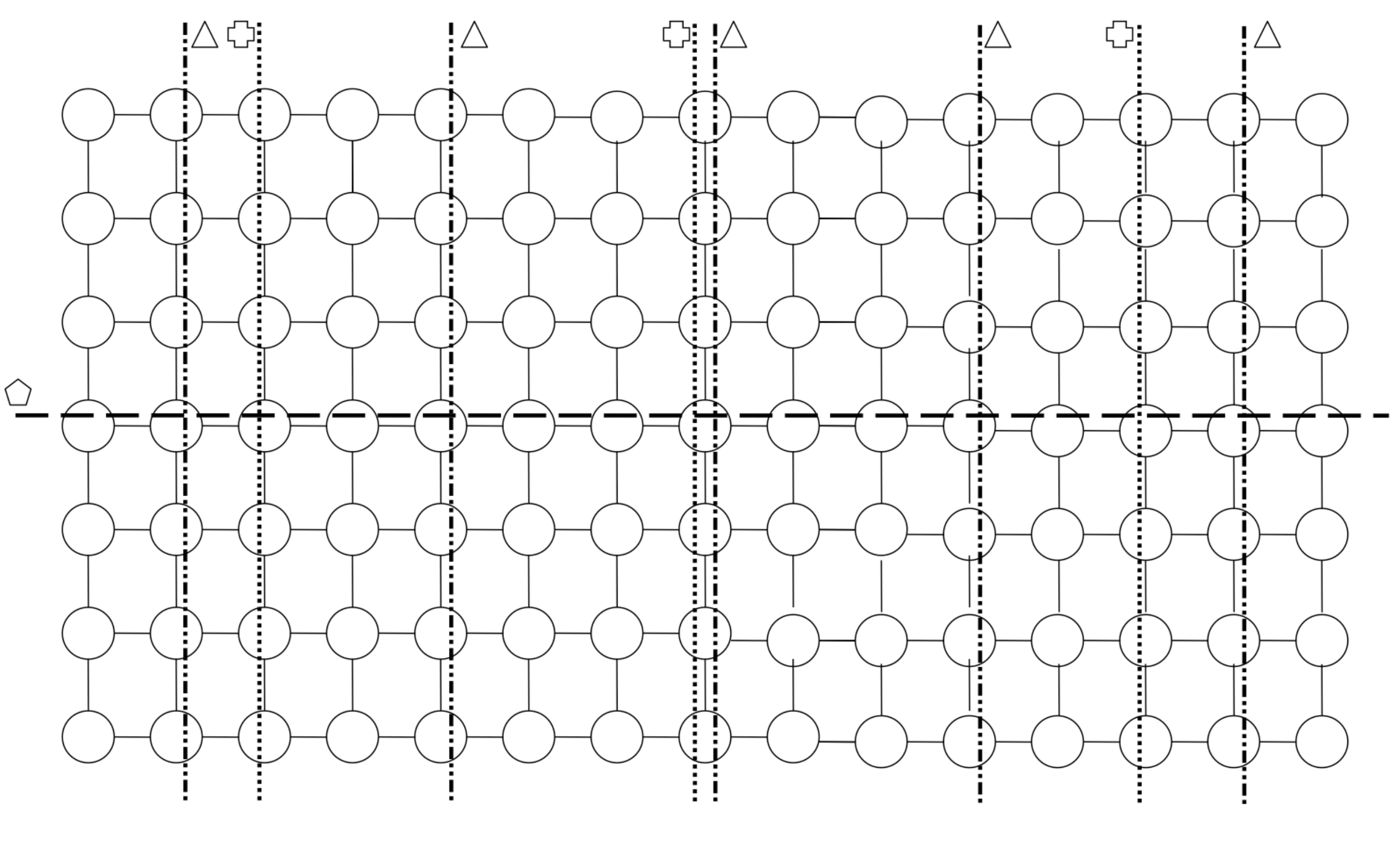}%
  \caption{Controllability (observability) partition for a $7\times 15$ grid graph.}
  \label{fig:grid_7x15}
\end{figure}

Clearly, all the nodes that are not crossed by any line are controllable
(observable). Also, a subset of nodes from which the graph is controllable
(observable) can be easily constructed by suitably combining the different
symbols. Equivalently, given a set of control (observation) nodes, testing the
associated symbols easily gives the controllability (observability) property
from the given set of nodes. For example, from the pair of nodes $i_1=[1,2]$ and
$i_2=[4,1]$ the grid is not controllable (observable). Indeed, the partitions
are $O_1 = [3,7]$ and $O_2 = [3,7] \union [5,7]$ whose intersection is
$[3,7]$. The uncontrollable (unobservable) eigenvalues are, according to
Theorem~\ref{thm:main_thm_simple_grid}, $1+(2-2\cos\frac{\pi}{7})$,
$1+(2-2\cos\frac{3 \pi}{7})$ and $1+(2-2\cos\frac{5 \pi}{7})$.
Following the same logic the grid is controllable (observable) form the set $[1,2]$ and
$[1,3]$, but it is not controllable (observable) from  $[1,2]$,
$[1,8]$ and $[4,1]$ (and from any subset of them). 
We let the reader play with it and have fun.

\section{Eigenstructure of general grid graphs}
\label{sec:general_grids}
In order to characterize the controllability and observability of general grid
graphs we need to exploit their eigenstructure. Indeed, the main difference with
respect to the simple case analysis relies in the structure of the uncontrollable
(unobservable) eigenvectors. While for simple grids they can be always written
as the Kronecker product of two eigenvectors of the path (because the
eigenvalues are all simple), this property does not hold for the eigenvectors of
non-simple grids. Thus, the controllability (observability) analysis can not be
performed by simply looking at how the zeros of the path eigenvectors propagate
into the grid. Indeed, this analysis provides only necessary conditions for
controllability (observability).

This section will be organized as follows. First, we characterize symmetries in
the structure of the grid eigenvectors. This analysis allows us to recognize the
components of the eigenvectors that have to be equal. Second, we provide
conditions to show what are all and only the components that are zero when a
given component is forced to zero. Thus, with this results in hand, we are able
to provide \emph{necessary and sufficient} conditions for controllability (observability).

We begin by characterizing symmetries of the path eigenvectors and then, using
these results, we characterize symmetries of the grid eigenvectors by suitable
grid partitions.
For the sake of clarity we provide the analysis and results for two dimensional
grids ($d=2$). The results for higher dimensions are based on the
  same arguments and will be discussed in a remark.


\subsection{Symmetries of the path Laplacian eigenvectors}
\label{subsec:path_results}
We provide results on the structure and symmetries of the
Laplacian eigenvectors of a path graph. 
%
The next lemma characterizes the symmetry of the path Laplacian eigenvectors.
%
\begin{lemma}[Symmetries of the path Laplacian eigenvectors]
  \label{lem:sym_path_eigvec}
  Any eigenvector $v$ of the Laplacian of a path graph satisfies either $v = \Pi
  v$ or $v = -\Pi v$, with $\Pi$ the usual permutation matrix.
\end{lemma}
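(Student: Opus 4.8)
The plan is to exploit the reflection symmetry of the path together with the simplicity of its Laplacian spectrum. First I would observe that relabeling the nodes of the path by $i \mapsto n+1-i$ maps the graph to itself, since the edge set $\{(i,i+1)\}$ is invariant under this reversal. At the level of the Laplacian this amounts to $L_{ij} = L_{(n+1-i)(n+1-j)}$ for all $i,j$, which is the statement that $\Pi^T L \Pi = L$, where $\Pi$ is the reversal permutation matrix introduced in the Notation section. Since $\Pi = \Pi^T$ and $\Pi^2 = I$, this is equivalent to the commutation relation $\Pi L = L \Pi$, which I would record as the first step.

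Next I would invoke the fact that the path Laplacian is a symmetric tridiagonal (Jacobi) matrix whose off-diagonal entries all equal $-1 \neq 0$, and that such matrices have simple spectrum; hence every eigenvalue of $L$ has multiplicity one. This is the classical simplicity property of Jacobi matrices, and it is also recorded in the path results invoked from \cite{GP-GN:12}.

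With these two ingredients the core argument is short. Let $v$ be an eigenvector with $L v = \lambda v$. Applying the commutation relation,
\[
L (\Pi v) = \Pi L v = \lambda \, \Pi v,
\]
so $\Pi v$ is again an eigenvector of $L$ for the same eigenvalue $\lambda$. Because $\lambda$ is simple, the corresponding eigenspace is one-dimensional, and therefore $\Pi v = c\, v$ for some scalar $c$. Applying $\Pi$ once more and using $\Pi^2 = I$ gives $v = \Pi^2 v = c^2 v$, whence $c^2 = 1$ and $c = \pm 1$. Thus $\Pi v = v$ or $\Pi v = -v$, which is exactly the claim.

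The only delicate point, and the one I would flag as the crux, is the appeal to simplicity of the spectrum. Commutation of $\Pi$ and $L$ alone would only guarantee the existence of an eigenbasis consisting of symmetric or antisymmetric vectors; it is the simplicity of each eigenvalue that upgrades this to the stated conclusion about \emph{every} eigenvector. Indeed, were some eigenvalue degenerate, one could form a mixed combination of a symmetric and an antisymmetric eigenvector that is neither $\Pi$-symmetric nor $\Pi$-antisymmetric, violating the lemma. Hence the simplicity of the path Laplacian spectrum is the essential structural fact on which the proof rests.
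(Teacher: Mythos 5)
Your proof is correct and follows essentially the same route as the paper's: both establish $L = \Pi L \Pi$ (equivalently $\Pi L = L\Pi$), observe that $\Pi v$ is then an eigenvector for the same eigenvalue, and invoke simplicity of the path Laplacian spectrum to conclude $\Pi v = c\,v$ with $c = \pm 1$. The only cosmetic difference is the last step, where the paper deduces $c=\pm1$ from $\Pi$ being an isometry while you use the involution $\Pi^2 = I$; your explicit justification of spectral simplicity via the Jacobi-matrix structure is a welcome addition, since the paper asserts it without proof.
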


\begin{proof}
  Let $L \in \real^{n\times n}$ be the Laplacian of the path. Straightforward
  calculations show that $L$ satisfies $L = \Pi L \Pi$. Now, let $v$ be a
  Laplacian eigenvector, then
  $\Pi L \Pi v = \lambda v$.
  Multiplying both sides by $\Pi$ (and remembering that $\Pi^2 = I$), we
  get 
  $L \Pi v = \lambda \Pi v$,
  so that $\Pi v$ is also an eigenvector of $L$ associated to the eigenvalue
  $\lambda$. Since any eigenvalue of $L$ has multiplicity one, it must hold
  $v = \alpha \Pi v$,
  for some nonzero $\alpha \in \real$. Using the fact that the linear map $\Pi$
  is an isometry (i.e. it preserves the norm), $||\Pi v || = || v||$, it follows
  straight that either $\alpha = 1$ or $\alpha = -1$, which concludes the proof.
\end{proof}

In the rest of the paper we will denote $S^+$ (respectively $S^-$) the set of
vectors satisfying $v=\Pi v$ (respectively $v=-\Pi v$). An important property of
$S^+$ and $S^-$ is that one is the orthogonal complement of the other,
i.e. $(S^+)^\perp = S^{-}$.

The next lemma relates the eigenstructure of a given path $P$ to the eigenstructure
of any path with length multiple of the length of $P$.
\begin{lemma}[Laplacian eigenstructure of $P_{n}$ and $P_{kn}$]
  \label{lmm:sym_path_eigvec_multiple}
  Let $\lambda_1, \ldots, \lambda_{n}$ be the eigenvalues of the Laplacian $L_n$
  of a path $P_n$ of length $n$ and $v_1, \ldots, v_{n}$ the corresponding
  eigenvectors. Then any path $P_{kn}$ of length $k n$, for some $k \in \natural$, with
  Laplacian matrix $L_{k n}$ satisfies:
  \begin{enumerate}
  \item $\lambda_1, \ldots, \lambda_{n}$ are eigenvalues of $L_{k n}$;
  \item each eigenvector $w_i\in \real^{k n}$ of $L_{k n}$ associated to
    $\lambda_i$, $i\in \until{n}$, has the form 
\[
w_i =
    \begin{bmatrix}
      v_i^T &
      (\Pi v_i)^T &
      v_i^T&
      \ldots
    \end{bmatrix}^T.
\]
   \end{enumerate}
\end{lemma}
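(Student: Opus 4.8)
The plan is to exhibit the block vector $w := [\,v^T\ (\Pi v)^T\ v^T\ \cdots\,]^T\in\real^{kn}$ (with $v := v_i$ and $\lambda := \lambda_i$ fixed) as an eigenvector of $L_{kn}$ for the eigenvalue $\lambda$ by a direct, node-by-node verification of $L_{kn}w=\lambda w$, and then to promote this to both claimed statements using the simplicity of the path spectrum. First I would record the row equations of the tridiagonal path Laplacian: at a degree-one endpoint $L_{kn}w=\lambda w$ reads $w_1-w_2=\lambda w_1$ (and symmetrically at the far end), while at a degree-two interior node $j$ it reads the three-term recurrence $-w_{j-1}+2w_j-w_{j+1}=\lambda w_j$. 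The eigenvector $v$ of $L_n$ satisfies exactly the analogous equations on $P_n$: the endpoint relations $(v)_1-(v)_2=\lambda(v)_1$ and $-(v)_{n-1}+(v)_n=\lambda(v)_n$, and the interior recurrence $-(v)_{j-1}+2(v)_j-(v)_{j+1}=\lambda(v)_j$ for $1<j<n$.

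Next I would verify $L_{kn}w=\lambda w$ by splitting the $kn$ nodes into three groups. (a) For nodes strictly interior to a single block, the equation is literally an interior (or endpoint) equation of $L_n$ for $v$, possibly after the index reversal induced by $\Pi$; since $\Pi L_n\Pi=L_n$ (as used in Lemma~\ref{lem:sym_path_eigvec}) the reversed block $\Pi v$ satisfies the same recurrences, so these rows hold. (b) At a junction between two consecutive blocks the flip forces the two components straddling the seam to coincide: at a $v\mid\Pi v$ seam one has $w_n=(v)_n=(\Pi v)_1=w_{n+1}$, and at a $\Pi v\mid v$ seam one has $w_{2n}=(v)_1=w_{2n+1}$. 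Because of this doubled value the three-term recurrence at a junction node collapses to a degree-one endpoint equation of $P_n$ — the last-node equation at a $v\mid\Pi v$ seam and the first-node equation at a $\Pi v\mid v$ seam — each of which $v$ already satisfies. (c) The two global endpoints of $P_{kn}$ similarly reduce to endpoint equations of $P_n$ (the first-node equation at position $1$, and, depending on the parity of $k$, the last- or first-node equation at position $kn$). Every resulting equation is one satisfied by $v$, so $L_{kn}w=\lambda w$, which proves statement (i).

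For statement (ii) I would additionally invoke that a path Laplacian has a \emph{simple} spectrum: its eigenvalues are $2-2\cos\!\big(\pi(j-1)/m\big)$, strictly increasing in $j$, hence distinct. Applying this to $L_{kn}$ shows $\lambda$ is a simple eigenvalue of $L_{kn}$, so its eigenspace is one-dimensional and spanned by $w$; therefore any eigenvector of $L_{kn}$ associated to $\lambda_i$ is a scalar multiple of $w_i$ and thus has the stated block form (the scalar being absorbed into the choice of $v_i$).

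I expect the only real obstacle to be the index bookkeeping at the seams: one must track carefully how $\Pi$ reverses each block and confirm that \emph{both} seam types, \emph{and} both parities of the terminal block at the far endpoint, reduce to the correct first- or last-node boundary equation of $P_n$. The computation is elementary but sign- and index-sensitive, so the care lies in matching each collapsed recurrence to the precise endpoint relation of $v$ rather than in any deeper difficulty.
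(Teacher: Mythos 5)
Your proof is correct and follows essentially the same route as the paper's: the paper writes $L_{kn}$ in block-tridiagonal form with diagonal blocks $L_n$ plus rank-one boundary corrections and checks that those correction terms vanish for the alternating $v,\ \Pi v,\ v,\dots$ vector, which is exactly your seam-cancellation computation organized block-wise rather than row-by-row. Your explicit appeal to the simplicity of the spectrum of $L_{kn}$ to obtain statement (ii) is a small but genuine completion: the paper's proof only verifies that the constructed vector is an eigenvector and leaves the uniqueness step implicit, relying on the fact (used elsewhere in the paper) that every path Laplacian eigenvalue has multiplicity one.
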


\begin{proof}
  For the sake of clarity we prove the statement for $k=3$, but the proof for
  the general case is easily generalizable.
  The Laplacian $L_{kn}$ can be written in terms of $L_n$, whose structure is
  given in Appendix, as
  \[
  L_{k n} =
  \begin{bmatrix}
    L_n + e_n e_n^T & -e_n e_1^T & 0\\[1.2ex]
    -e_1 e_n^T & L_n + e_1 e_1^T + e_n e_n^T & -e_n e_1^T\\[1.2ex]
    0 & -e_1 e_n^T & L_n + e_1 e_1^T
  \end{bmatrix}.
  \]
  Now, let us write the eigenvector $w_i$ associated to $\lambda_i$ as $w_i =
  \begin{bmatrix}
    v_a^T & v_b^T & v_c^T
  \end{bmatrix}^T$,
  with $v_a$, $v_b$ and $v_c$ in $\real^n$. Thus $w_i$ satisfies
  \[
  L_{kn} w_i =
  \begin{bmatrix}
    L_n v_a + e_n \big( (v_a)_n - (v_b)_1 \big )\\[1.2ex]
    L_n v_b + e_1 \big( (v_b)_1 - (v_a)_n \big ) + e_n \big( (v_b)_n - (v_c)_1 \big )\\[1.2ex]
    L_n v_c + e_1 \big( (v_c)_1 - (v_b)_n \big )
  \end{bmatrix}
  \]
  Now let us take $v_a = v_i$, $v_b = \Pi v_i$ and $v_c = v_i$, then
  \[
  \begin{split}
    L_{kn} w_i &= \lambda_i w_i +
    \begin{bmatrix}
      e_n \big( (v_i)_n - (\Pi v_i)_1 \big )\\[1.2ex]
      e_1 \big( (\Pi v_i)_1 - (v_i)_n \big ) + e_n \big( (\Pi v_i)_n - (v_i)_1 \big )\\[1.2ex]
      e_1 \big( (v_i)_1 - (\Pi v_i)_n \big )
    \end{bmatrix}
    = \lambda_i w_i.
  \end{split}
  \]
  Last equality follows by the fact that $(\Pi v_i)_1 = (v_i)_n$ and $(\Pi
  v_i)_n = (v_i)_1$ (in general $(\Pi v_i)_\ell = (v_i)_{n-\ell+1}$ for any
  $\ell \in \until{n}$).
\end{proof}

Exploiting the result in the above lemma by using the result in
Lemma~\ref{lem:sym_path_eigvec}, it follows easily that $w_i =
\begin{bmatrix}
  v_i^T & v_i^T & v_i^T &\ldots
\end{bmatrix}^T$ for $v_i = \Pi v_i$ (and thus $w_i = \Pi w_i$) and $w_i =~
\begin{bmatrix}
  v_i^T & -v_i^T & v_i^T &\ldots
\end{bmatrix}^T$ for $v_i = -\Pi v_i$ (and thus $w_i = -\Pi w_i$).

\subsection{Symmetries of the grid eigenvectors}
Next, we provide tools to recognize symmetries in the grid eigenvectors, based
on the graph structure, which will play a key role in the controllability (observability)
analysis.

Without loss of generality, let $\lambda = \lambda_{1,1} + \lambda_{1,2} =
\ldots = \lambda_{\mu,1} + \lambda_{\mu,2}$ be an eigenvalue of geometric
multiplicity $\mu\in\natural$, with $\lambda_{1,1}, \ldots, \lambda_{\mu,1}$
(respectively $\lambda_{1,2}, \ldots, \lambda_{\mu,2}$) eigenvalues of $P_1$
(respectively $P_2$) and corresponding eigenvectors $v_1, \ldots v_\mu$
(respectively $w_{1}, \ldots, w_\mu$). The corresponding eigenspace
$V_{\lambda}$ is given by
\begin{equation}
  V_{\lambda} = \{ v \in \real^{n_1 \cdot n_2} | v = \sum_{i=1}^\mu \alpha_i (v_{i} \otimes w_{i}), \alpha_i \in \real \}.
  \label{eq:V_lambda}
\end{equation}

As mentioned at the beginning of this section, it is worth noting that the
eigenvectors in $V_\lambda$ do not necessarily have the structure of a Kronecker
product of two eigenvectors (the set of vectors expressed as Kronecker product
is not closed under linear combination). For this reason, in order identify all
and only the zero components of these eigenvectors, we need to characterize
their structure.

\begin{remark}
  For each node $[\nu,\ell]$ such that the paths $P_1$ and $P_2$ are controllable
  (observable) from $\nu$ and $\ell$ respectively, all the basis eigenvectors of
  $V_\lambda$ have nonzero $[\nu,\ell]$ component. \oprocend
\end{remark}

Before stating the main results of this section, we need to introduce some
useful notation.  Given a path $P_{n}$ of length $n\in\natural$, for
$\{i,j\}\subset \until{n}$, $i<j$, we denote $P_{i:j}$ the sub-path of $P_n$
with node set $\fromto{i}{j}$ (e.g., $P_{2:4}$ is the sub-path with node set
\{2,3,4\}).
Let $G = P_{l\cdot n_1} \Box P_{m\cdot n_2}$ with $P_{l\cdot n_1}$ of dimension
$l \cdot n_1$ and $P_{m\cdot n_2}$ of dimension $m \cdot n_2$. We call $G_{ij} =
P_{((i-1)n_1+1):(i n_1)} \Box P_{((j-1)n_2+1):(j n_2)}$, for $i\in\until{l}$ and
$j\in\until{m}$, an $n_1 \times n_2$ sub-grid or a \emph{brick} of $G$, see
Figure~\ref{fig:subgrid_partition}.


\begin{figure}[h!]
  \begin{center}

    \begin{picture}(220,130)
	
      \setlength{\unitlength}{0.75pt}
	
      \put(10,10){\line(1,0){250}}
      \put(10,40){\line(1,0){250}}
      \put(10,100){\line(1,0){250}}
      \put(10,130){\line(1,0){250}}

      \put(10,10){\line(0,1){120}}
      \put(50,10){\line(0,1){120}}
      \put(90,10){\line(0,1){120}}
      \put(220,10){\line(0,1){120}}
      \put(260,10){\line(0,1){120}}

	
      \put(0,20){\mbox{$l$}}
      \put(0,70){\mbox{$\vdots$}}
      \put(0,110){\mbox{$1$}}
      \put(22,110){\mbox{$G_{11}$}}
      \put(62,110){\mbox{$G_{12}$}}
      \put(232,110){\mbox{$G_{1m}$}}
	
      \put(22,20){\mbox{$G_{\ell 1}$}}
      \put(62,20){\mbox{$G_{\ell 2}$}}
      \put(232,20){\mbox{$G_{\ell m}$}}
	
      \put(25,135){\mbox{$1$}}
      \put(65,135){\mbox{$2$}}
      \put(125,135){\mbox{$\ldots$}}
      \put(235,135){\mbox{$m$}}

    \end{picture}

  \end{center}

  \caption{Partition of a grid into bricks}
  \label{fig:subgrid_partition}
\end{figure}

Let $v \in \real^{l \cdot n_1 \cdot m \cdot n_2}$ be a vector of $G$, we call
the \emph{sub-vector of $v$ associated to $G_{ij}$} the vector
$v_{ij}\in\real^{n_1 \cdot n_2}$ with components $(v_{ij})_{[\nu, \,\ell]}$,
$\nu\in\fromto{1}{n_1}$ and $\ell\in\fromto{1}{n_2}$, given by $(v_{ij})_{[\nu,
  \,\ell]} = (v)_{[(i\text{-}1) n_1\text{+}\nu, \,(j\text{-}1)
  n_2\text{+}\ell]}$.
Informally, the sub-vector $v_{ij}$ of $v$ is constructed by selecting the
components of $v$ that fall into the brick $G_{ij}$.


Next, given a grid $G = P_{n_1} \Box P_{n_2}$, with $P_{n_1}$ and $P_{n_2}$
paths of length $n_1$ and $n_2$ respectively, we introduce two useful operators
that flip the components of a vector $v$ associated to a grid $G$. Formally,
consider the matrices
\[
(\Pi_{n_1} \otimes I_{n_2}) =
\begin{bmatrix}
 0_{n_2\times n_2} & \ldots & I_{n_2} \\[1.2ex]
  &   \iddots &\\[1.2ex]
  I_{n_2} & \ldots & 0_{n_2\times n_2}\\[1.2ex]
\end{bmatrix}
\; \text{and} \;\;
(I_{n_1}\otimes \Pi_{n_2}) =
\begin{bmatrix}
 \Pi_{n_2} & \ldots & 0_{n_2\times n_2}\\[1.2ex]
  & \ddots &\\[1.2ex]
  0_{n_2\times n_2} & \ldots & \Pi_{n_2}\\[1.2ex]
\end{bmatrix}.
\]
These operators flip respectively the first and the second sets of components. Formally,
given a vector $v\in\real^{n_1\cdot n_2}$ associated to the grid $G$, with
components $(v)_{[\nu, \,\ell]}$, $\nu\in\until{n_1}$ and $\ell\in\until{n_2}$,
let $v_1 = (\Pi_{n_1} \otimes I_{n_2}) v$ and $v_2 = (I_{n_1}\otimes \Pi_{n_2})
v$. The vectors $v_1$ and $v_2$ are related to $v$ by
\[
(v_1)_{[\nu, \,\ell]} = (v)_{[n_1-\nu+1, \,\ell]}, \;\; \text{and} \;\;
(v_2)_{[\nu, \,\ell]} = (v)_{[\nu, \,n_2-\ell+1]},
\]
for $\nu\in\until{n_1}$ and $\ell\in\until{n_2}$.  Finally, the composition of
the two operators satisfies $(\Pi_{n_1} \otimes I_{n_2}) (I_{n_1}\otimes
\Pi_{n_2}) = (\Pi_{n_1} \otimes \Pi_{n_2})$. Thus, when applied to a vector $v$,
the composed operator flips both sets of components. That is, denoting $v_3 = (\Pi_{n_1}
\otimes \Pi_{n_2}) v$, we have
\[
(v_3)_{[\nu, \,\ell]} = (v)_{[n_1-\nu+1, \, n_2-\ell+1]},
\]
for $\nu\in\until{n_1}$ and $\ell\in\until{n_2}$.


\begin{lemma}
  Let $G_0 = P_{n_1} \Box P_{n_2}$ with $P_{n_1}$ and $P_{n_2}$ paths of length
  respectively $n_1$ and $n_2$. Any eigenvalue $\lambda$ of the Laplacian $L_0$
  of $G_0$ is an eigenvalue of the Laplacian $L$ of $G = P_{l\cdot n_1} \Box
  P_{m\cdot n_2}$ for any $l\in\natural$ and $m\in\natural$.
  \label{lmm:eigs_G0_G}
\end{lemma}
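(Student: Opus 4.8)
The plan is to chain together the two structural lemmas already in hand: Lemma~\ref{lmm:eigstruc_cartesian_prod} on the eigenstructure of cartesian products, and Lemma~\ref{lmm:sym_path_eigvec_multiple} on how a path eigenvalue persists when the path length is multiplied by an integer. First I would decompose the given eigenvalue. Since $G_0 = P_{n_1} \Box P_{n_2}$, Lemma~\ref{lmm:eigstruc_cartesian_prod} guarantees that any eigenvalue $\lambda$ of $L_0$ can be written as $\lambda = \mu + \nu$, where $\mu$ is a Laplacian eigenvalue of $P_{n_1}$ with eigenvector $v\in\real^{n_1}$ and $\nu$ is a Laplacian eigenvalue of $P_{n_2}$ with eigenvector $w\in\real^{n_2}$.

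Next I would lift each summand to the longer path. Applying Lemma~\ref{lmm:sym_path_eigvec_multiple} with $k=l$ shows that $\mu$ is an eigenvalue of the Laplacian of $P_{l\cdot n_1}$, with a corresponding eigenvector $V\in\real^{l\cdot n_1}$ obtained by stacking alternating copies $\begin{bmatrix} v^T & (\Pi v)^T & v^T & \ldots\end{bmatrix}^T$. Applying the same lemma with $k=m$ shows that $\nu$ is an eigenvalue of the Laplacian of $P_{m\cdot n_2}$, with an analogous stacked eigenvector $W\in\real^{m\cdot n_2}$ built from $w$. Finally I would reassemble: invoking Lemma~\ref{lmm:eigstruc_cartesian_prod} once more, now for $G = P_{l\cdot n_1} \Box P_{m\cdot n_2}$, the sum $\mu+\nu = \lambda$ is an eigenvalue of $L$, and $V\otimes W$ is a corresponding eigenvector. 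This also exposes the ``brick'' picture anticipated in the introduction: the grid eigenvector of $G$ is assembled from copies of the basic brick eigenvector $v\otimes w$, with the flips governed by the signs $v=\pm\Pi v$ and $w=\pm\Pi w$ supplied by Lemma~\ref{lem:sym_path_eigvec}.

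There is no substantive obstacle here; the statement is a clean composition of two prior results. The only point deserving a moment of care is to invoke the two lemmas with compatible eigenvector conventions, so that the concluding Kronecker product $V\otimes W$ is a genuine eigenvector of $L$ rather than merely a witness that $\lambda$ lies numerically in the spectrum. This reduces to confirming that the stacked vectors $V$ and $W$ are eigenvectors of the respective longer-path Laplacians, which is exactly what Lemma~\ref{lmm:sym_path_eigvec_multiple} already establishes, so no separate verification is needed.
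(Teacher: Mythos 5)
Your proof is correct and follows essentially the same route as the paper: decompose $\lambda$ into path eigenvalues via Lemma~\ref{lmm:eigstruc_cartesian_prod}, lift each summand to the longer path via Lemma~\ref{lmm:sym_path_eigvec_multiple}, and recombine with Lemma~\ref{lmm:eigstruc_cartesian_prod}. The explicit construction of the eigenvector $V\otimes W$ is a nice addition (the paper leaves the recombination implicit), but it is the same argument.
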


\begin{proof}
  From Lemma~\ref{lmm:eigstruc_cartesian_prod}, each eigenvalue $\lambda_0$ of
  $G_0$ (respectively $\lambda$ of $G$) can be written as $\lambda_0 =
  \lambda_{01} + \lambda_{02}$ ($\lambda = \lambda_1 + \lambda_2$)
  where $\lambda_{01}$ ($\lambda_1$) is an eigenvalue of $P_{n_1}$ ($P_{l\cdot n_1}$) and
  $\lambda_{02}$ ($\lambda_2$) of $P_{n_2}$ ($P_{m\cdot n_2}$). From
  Lemma~\ref{lmm:sym_path_eigvec_multiple} all the eigenvalues of $P_{n_1}$
  ($P_{n_2}$) are eigenvalues of $P_{l\cdot n_1}$ ($P_{m\cdot n_2}$) so that the
  proof follows.
\end{proof}

We are now ready to characterize the eigenvector symmetries by suitable brick
partitions.
\begin{theorem}[Grid partition and eigenvector symmetries]
  \label{thm:subgrid_partition}
  Let $G_0 = P_{n_1} \Box P_{n_2}$ be a grid of dimension $n_1\times n_2$ with
  $P_{n_1}$ and $P_{n_2}$ paths of dimension respectively $n_1$ and $n_2$.
  Take any grid $G = P_{l\cdot n_1} \Box P_{m\cdot n_2}$ 
  of dimension $l n_1\times m n_2$
  and let $G_{ij}$, $i\in\until{l}$ and $j\in\until{m}$, be a partition into
  bricks of dimension $n_1 \times n_2$.
		%
		
		
  Then for each eigenvalue (possibly non-simple) $\lambda$ of $L_0$:
\begin{enumerate}
\item $\lambda$ is an eigenvalue of $L$, and
\item any eigenvector $v$ of $L$ associated to $\lambda$ can be decomposed into sub-vectors $v_{ij}$
  relative to the bricks $G_{ij}$ with
  \[
  v_{ij} = (\Pi_{n_1} \otimes I_{n_2})^{(i-1)} (I_{n_1}\otimes
  \Pi_{n_2})^{(j-1)} v_0
  \]
  for $i\in\until{n_1}$ and $j\in\until{n_2}$, where $v_0$ is an eigenvector of
  $L_0$ associated to $\lambda$.
  \end{enumerate}
\end{theorem}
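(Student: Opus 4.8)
The plan is as follows. Statement (i) requires nothing new: it is exactly the content of Lemma~\ref{lmm:eigs_G0_G}. All the work lies in statement (ii), and the natural strategy is to first build the claimed eigenvectors explicitly from the path-level data and only afterwards address whether they fill out the whole eigenspace. First I would reduce to tensor products: by Lemma~\ref{lmm:eigstruc_cartesian_prod} the $\lambda$-eigenspace $V_\lambda$ of $L_0$ is spanned by vectors $v^{(1)}\otimes v^{(2)}$, where $v^{(1)}$ is a Laplacian eigenvector of $P_{n_1}$ for some $\lambda_{01}$, $v^{(2)}$ one of $P_{n_2}$ for some $\lambda_{02}$, and $\lambda_{01}+\lambda_{02}=\lambda$. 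It therefore suffices to establish the brick formula for such a $v_0=v^{(1)}\otimes v^{(2)}$ and then invoke linearity.

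Second, I would lift each direction separately using Lemma~\ref{lmm:sym_path_eigvec_multiple}. That lemma gives that $\lambda_{01}$ is an eigenvalue of $P_{l\cdot n_1}$ with an eigenvector $V^{(1)}$ whose $i$-th length-$n_1$ block equals $\Pi_{n_1}^{\,i-1}v^{(1)}$, and likewise $\lambda_{02}$ an eigenvalue of $P_{m\cdot n_2}$ with eigenvector $V^{(2)}$ whose $j$-th block is $\Pi_{n_2}^{\,j-1}v^{(2)}$. By Lemma~\ref{lmm:eigstruc_cartesian_prod} the Kronecker product $V^{(1)}\otimes V^{(2)}$ is then an eigenvector of $L$ for $\lambda_{01}+\lambda_{02}=\lambda$. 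Under the row-major node labeling fixed in the preliminaries, the sub-vector of $V^{(1)}\otimes V^{(2)}$ on the brick $G_{ij}$ is precisely the $i$-th block of $V^{(1)}$ tensored with the $j$-th block of $V^{(2)}$, namely $(\Pi_{n_1}^{\,i-1}v^{(1)})\otimes(\Pi_{n_2}^{\,j-1}v^{(2)})$. The mixed-product rule for Kronecker products turns this into $(\Pi_{n_1}^{\,i-1}\otimes\Pi_{n_2}^{\,j-1})\,v_0=(\Pi_{n_1}\otimes I_{n_2})^{i-1}(I_{n_1}\otimes\Pi_{n_2})^{j-1}v_0$, which is exactly the asserted decomposition. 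Since the map $\Phi\colon v_0\mapsto v$ defined by this brick rule is linear and injective (its first brick returns $v_0$), applying it to a spanning set of tensor eigenvectors shows that \emph{every} $v_0\in V_\lambda$ produces an $L$-eigenvector of the claimed form.

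The step I expect to be the genuine obstacle is the converse inclusion implicit in the word \emph{any}: that every eigenvector of $L$ at $\lambda$ arises as $\Phi(v_0)$ for some $v_0\in V_\lambda$. Since $\Phi$ is injective with image inside the $\lambda$-eigenspace of $L$, this is equivalent to the multiplicity-preservation statement $\dim\ker(L-\lambda I)=\dim\ker(L_0-\lambda I)$. This is delicate, because the enlarged paths $P_{l\cdot n_1}$ and $P_{m\cdot n_2}$ acquire eigenvalues absent from $P_{n_1}$ and $P_{n_2}$, and one must rule out that such new path-eigenvalues recombine (again via Lemma~\ref{lmm:eigstruc_cartesian_prod}) to reproduce $\lambda$ and so enlarge the eigenspace. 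Controlling this requires quantitative information on the path spectra $2-2\cos(k\pi/N)$ and on the admissible splittings of $\lambda$; I would expect it to be the crux of the argument and the point demanding the most care, whereas the constructive part above is otherwise routine.
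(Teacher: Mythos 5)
Your constructive half is exactly the paper's argument: statement (i) via Lemma~\ref{lmm:eigs_G0_G}, and the brick formula obtained by lifting each direction with Lemma~\ref{lmm:sym_path_eigvec_multiple} and applying the mixed-product rule (the paper phrases this as reducing to the bricks $(1,2)$ and $(2,1)$ and comparing rows). But the step you single out as the crux is not, in the paper, "the point demanding the most care" --- it is silently assumed. The paper's proof declares that a basis of the $\lambda$-eigenspace of $L$ consists of Kronecker products of eigenvectors of the constitutive paths of $G$ (correct, by Lemma~\ref{lmm:eigstruc_cartesian_prod}), and then immediately writes those path eigenvectors as the flip-periodic lifts $[v_i^T\;(\Pi v_i)^T\;v_i^T\;\ldots]^T \otimes [w_i^T\;(\Pi w_i)^T\;\ldots]^T$ of eigenvectors $v_i$, $w_i$ of the \emph{small} paths $P_{n_1}$, $P_{n_2}$. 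That notational move is precisely the multiplicity-preservation claim you isolated, smuggled in without proof. So your proposal and the paper's proof have the same hole; yours is merely honest about it.

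Moreover, your suspicion is justified in the strongest sense: the missing step is false, so the hole cannot be patched and statement (ii) fails as written. Take $n_1=n_2=2$, $l=m=2$, so $G_0=P_2\Box P_2$, $G=P_4\Box P_4$, and $\lambda=4=2+2$, an eigenvalue of $L_0$ whose eigenspace is spanned by $(1,-1)\otimes(1,-1)$. The spectrum of $P_4$ is $\{0,\,2-\sqrt2,\,2,\,2+\sqrt2\}$ and $(2-\sqrt2)+(2+\sqrt2)=4$, a splitting of $\lambda$ into eigenvalues of $P_4$ that are \emph{not} eigenvalues of $P_2$. The corresponding eigenvector of $L$ is $u=v\otimes w$ with $v=(c_1,c_3,-c_3,-c_1)$, $w=(c_3,-c_1,c_1,-c_3)$, where $c_1=\cos(\pi/8)$, $c_3=\cos(3\pi/8)$; its first brick is
\[
u_{11}=\bigl(c_1c_3,\;-c_1^2,\;c_3^2,\;-c_1c_3\bigr),
\]
which is not an eigenvector of $L_0$ at all: for instance
\[
(L_0u_{11})_{[2,2]} = c_1^2-c_3^2-2c_1c_3 = \cos(\pi/4)-\sin(\pi/4)=0,
\qquad
4\,(u_{11})_{[2,2]}=-4c_1c_3=-\sqrt2\neq 0 .
\]
Since the formula with $(i,j)=(1,1)$ forces $v_{11}=v_0$ to be an eigenvector of $L_0$ associated to $\lambda$, the eigenvector $u$ admits no decomposition of the claimed form. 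What is true is only that the image of your map $\Phi$ --- the span of products of lifted small-path eigenvectors --- sits inside the $\lambda$-eigenspace of $L$ and has the stated brick symmetry; the word "any" requires an additional hypothesis excluding extra splittings of $\lambda$ by new eigenvalues of $P_{l n_1}$, $P_{m n_2}$. This failure mode is closely related to the phenomenon the paper itself concedes, at the end of Section~\ref{sec:general_grids_observ}, that it cannot rule out and only conjectures away.
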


\begin{proof}
  Statement~(i) follows straight by Lemma~\ref{lmm:eigs_G0_G}.
  To prove statement~(ii), first, let us recall that the matrices $(\Pi_{n_1}
  \otimes I_{n_2})$ and $(I_{n_1}\otimes \Pi_{n_2})$ applied to the vectors
  $v_0$ respectively flip the first and second components. Also, $(\Pi_{n_1}
  \otimes I_{n_2})^{(i-1)} w = w$ for $i$ odd (respectively $(I_{n_1}\otimes
  \Pi_{n_2})^{(j-1)} w = w$ for $j$ odd). Thus, we can just prove the result for
  $(i,j) = (0,1)$ and $(i,j) = (1,0)$.

  Let $\mu$ be the geometric multiplicity of the eigenvalue $\lambda$ for the
  Laplacian $L$. Then, by Lemma~\ref{lmm:eigstruc_cartesian_prod}, a basis of
  the associated eigenspace $V_\lambda$ is given by $\mu$ vectors obtained as
  the Kronecker product of eigenvectors of the constitutive path graphs. That
  is,
  \[
  V_{\lambda} = \{ v \in \real^{n_1 \cdot n_2} | v = \sum_{i=1}^\mu \alpha_i
  (v_{i} \otimes w_{i}), \alpha_i \in \real \},
  \]
  where $v_i$ and $w_i$, $i\in\until{\mu}$ are eigenvectors of respectively
  $P_{n_1}$ and $P_{n_2}$ associated to eigenvalues $\lambda_{i,1}$ and
  $\lambda_{i,2}$ such that $\lambda_{i,1} + \lambda_{i,2} = \lambda$.
	%
  Exploiting the Kronecker product and using the result in
  Lemma~\ref{lmm:sym_path_eigvec_multiple}, we have

  \[
  \begin{split}
    v = \sum_{i=1}^\mu \alpha_i 
    \left[ (v_i)_1
      \begin{bmatrix}
        w_i\\
        \Pi w_i\\
        w_i\\
        \vdots
      \end{bmatrix}^T \;\; \ldots \;\; (v_i)_{n_1}
      \begin{bmatrix}
        w_i\\
        \Pi w_i\\
        w_i\\
        \vdots
      \end{bmatrix}^T \;\;
      \vline \;\;\right.
     \left.  (\Pi v_i)_1
      \begin{bmatrix}
        w_i\\
        \Pi w_i\\
        w_i\\
        \vdots
      \end{bmatrix}^T \;\; \ldots \; (\Pi v_i)_{n_1}
      \begin{bmatrix}
        w_i\\
        \Pi w_i\\
        w_i\\
        \vdots
      \end{bmatrix}^T \; \vline \ldots \right]^T.
  \end{split}
  \]
  Clearly, the brick $G_{11}$ coincides with the grid $G_0$. Thus, we can
  compare the bricks $G_{ij}$ with the brick $G_{11}$. The sub-eigenvector
  corresponding to the ``first row'' of the brick $G_{11}$ is given by
  \[
  (v_{11})_{[1, \,1:n_1]}:=
  \begin{bmatrix}
    (v_{11})_{[1,1]}\\
    \vdots\\
    (v_{11})_{[1,n_1]}\\
  \end{bmatrix}
  = \alpha_i (v_i)_1 w_i
  \]
  Using the definition of brick components, the sub-eigenvectors corresponding
  to the ``first row'' of the grid $G_{12}$ is
  \[
  \begin{split}
    (v_{12})_{[1, \,1:n_1]}:=
    \begin{bmatrix}
      (v_{12})_{[1,1]}\\
      \vdots\\
      (v_{12})_{[1,n_1]}\\
    \end{bmatrix}
    = \alpha_i (v_i)_1 \Pi w_i
    =
     \alpha_i (v_i)_1
    \begin{bmatrix}
      (w_i)_n\\
      \vdots\\
      (w_i)_1\\
    \end{bmatrix}
    =
    \begin{bmatrix}
      (v_{11})_{[1,n_1]}\\
      \vdots\\
      (v_{11})_{[1,1]}\\
    \end{bmatrix}.
  \end{split}
  \]
  The proof for the other components follows exactly the same arguments.
\end{proof}

The above theorem has a nice and intuitive graphical interpretation, as shown in
Figure~\ref{fig:grid_partition_S11}. 
\begin{figure}[htbp]
  \centering
  \includegraphics[height=.4\linewidth]{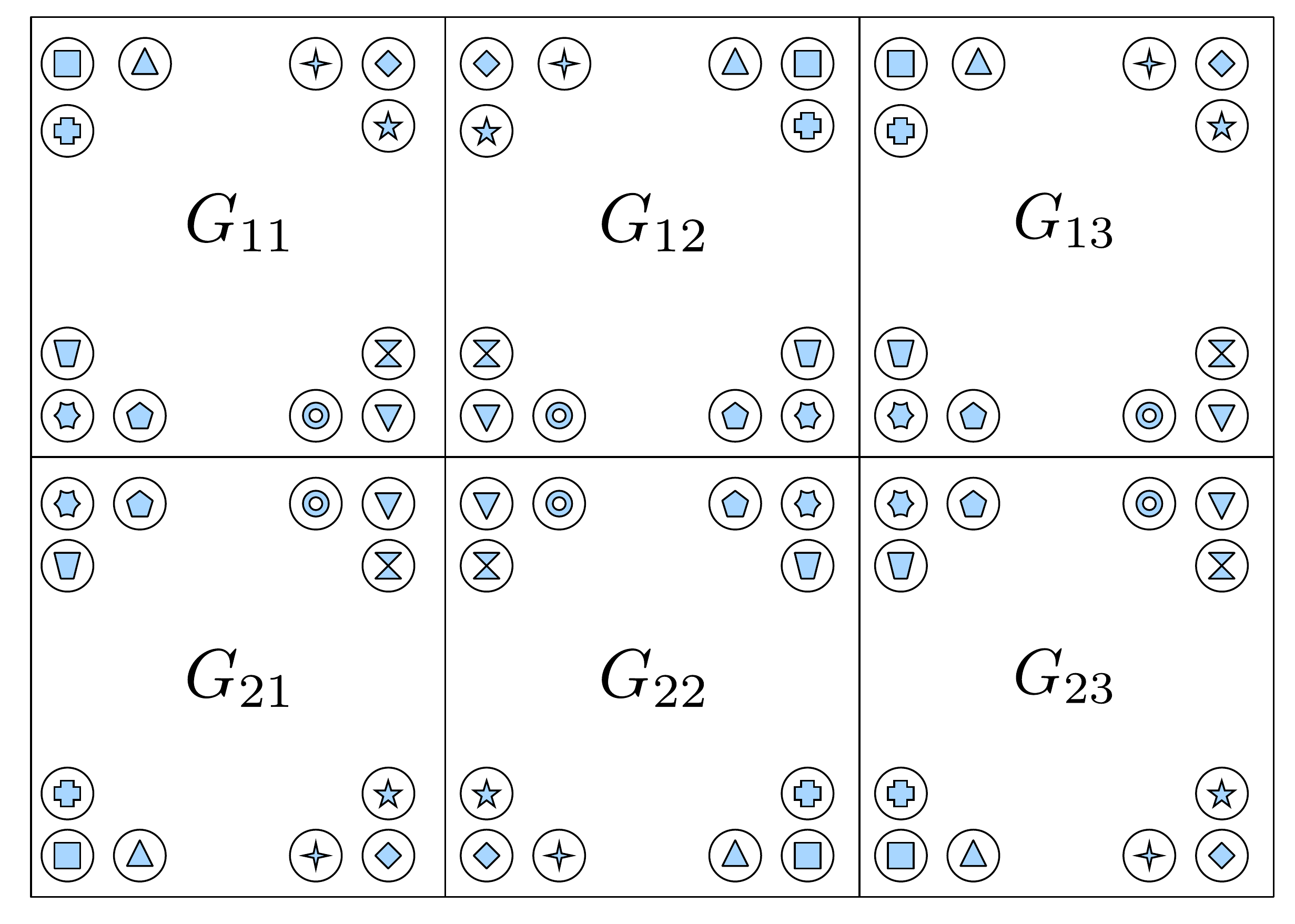}%
  \caption{Graphical interpretation of Theorem~\ref{thm:subgrid_partition}}
  \label{fig:grid_partition_S11}
\end{figure}

Given a grid $G$ and an eigenvector $v$
associated to an eigenvalue $\lambda$, we can associate a symbol to each node
depending on the value of the eigenvector component. Then, we partition the grid
$G$ into bricks of dimension $n_1 \times n_2$. Given the symbols in the
brick $G_{11}$, the symbols in a brick $G_{i,j}$, for $i\in\fromto{1}{l}$
and $j\in\fromto{2}{m}$, are obtained by a reflection of the brick $G_{i,j-1}$
with respect to the horizontal axis, while the symbols in a brick $G_{i,j}$,
for $i\in\fromto{2}{l}$ and $j\in\fromto{1}{m}$, are obtained by a reflection of
the brick $G_{i-1,j}$ with respect of the vertical axis.

Next, we analyze the eigenvector components of a brick whose dimensions are
prime or, equivalently, the components of eigenvectors associated to eigenvalues
that are not eigenvalues of smaller bricks.
Recalling that any path eigenvector $v$ satisfies either $v=\Pi v$ ($v\in S^+$)
or $v = -\Pi v$ ($v\in S^-$), we show how: (i) each basis eigenvector has a
symmetry induced by the symmetry of the path eigenvectors, and (ii) the
structure of a general grid eigenvector is influenced by the symmetry of the
basis eigenvectors.

\begin{proposition}
  \label{prop:subgrid_sym}
  Let $G_0 = P_{n_1} \Box P_{n_2}$ be a grid of dimension $n_1\times n_2$. For
  any eigenvalue $\lambda$, let $V_\lambda$ be the associated eigenspace, with
  structure as in equation \eqref{eq:V_lambda}. Then, each basis eigenvector
  generating $V_\lambda$, $(v_{i} \otimes w_{i})$, satisfies one of the four
  relations:
  \[
  \begin{split}
    (v_i \otimes w_i) &= \phantom{-}(\Pi \otimes I) (v_i \otimes w_i) = \phantom{-}(I \otimes \Pi) (v_i \otimes w_i)\\
    (v_i \otimes w_i) &= \phantom{-}(\Pi \otimes I) (v_i \otimes w_i) = -(I \otimes \Pi) (v_i \otimes w_i)\\
    (v_i \otimes w_i) &= -(\Pi \otimes I) (v_i \otimes w_i) = \phantom{-}(I \otimes \Pi) (v_i \otimes w_i)\\
    (v_i \otimes w_i) &= -(\Pi \otimes I) (v_i \otimes w_i)= -(I \otimes \Pi) (v_i \otimes w_i).\\
  \end{split}
  \]
\end{proposition}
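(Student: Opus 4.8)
The plan is to exploit the mixed-product (compatibility) property of the Kronecker product together with the one-dimensional symmetry result already established in Lemma~\ref{lem:sym_path_eigvec}. The key observation is that the two flip operators $(\Pi \otimes I)$ and $(I \otimes \Pi)$ act \emph{independently} on the two tensor factors of a basis eigenvector $v_i \otimes w_i$, so each can be evaluated in isolation using the corresponding path.

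First I would recall the identity $(A \otimes B)(x \otimes y) = (Ax)\otimes(By)$ for conformable matrices and vectors. Applying it to the first operator gives
\[
(\Pi \otimes I)(v_i \otimes w_i) = (\Pi v_i) \otimes (I\, w_i) = (\Pi v_i) \otimes w_i,
\]
and applying it to the second gives
\[
(I \otimes \Pi)(v_i \otimes w_i) = v_i \otimes (\Pi w_i).
\]
Now $v_i$ is an eigenvector of the path $P_{n_1}$ and $w_i$ an eigenvector of $P_{n_2}$, so Lemma~\ref{lem:sym_path_eigvec} applies to each separately: there exist signs $\epsilon_1, \epsilon_2 \in \{+1,-1\}$ with $\Pi v_i = \epsilon_1 v_i$ (that is, $v_i \in S^+$ or $v_i \in S^-$) and $\Pi w_i = \epsilon_2 w_i$. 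Substituting, we obtain
\[
(\Pi \otimes I)(v_i \otimes w_i) = \epsilon_1\,(v_i \otimes w_i), \qquad
(I \otimes \Pi)(v_i \otimes w_i) = \epsilon_2\,(v_i \otimes w_i).
\]

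Since $\epsilon_1$ and $\epsilon_2$ range independently over $\{+1,-1\}$, the pair $(\epsilon_1,\epsilon_2)$ takes exactly the four values listed in the statement, which concludes the argument. I do not anticipate any genuine obstacle: the result is an immediate corollary of Lemma~\ref{lem:sym_path_eigvec} once the Kronecker factors are decoupled, and the only point requiring a word of care is to note that $\Pi$ in the first operator means $\Pi_{n_1}$ acting on the $P_{n_1}$-factor while $\Pi$ in the second means $\Pi_{n_2}$ acting on the $P_{n_2}$-factor, consistent with the operators $(\Pi_{n_1}\otimes I_{n_2})$ and $(I_{n_1}\otimes \Pi_{n_2})$ introduced earlier. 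The independence of the two signs is precisely what produces four cases rather than two, and this independence is exactly the content of the mixed-product property.
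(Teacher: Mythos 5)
Your proposal is correct and follows essentially the same route as the paper: both apply Lemma~\ref{lem:sym_path_eigvec} to each path factor to get signs $\Pi v_i = \pm v_i$, $\Pi w_i = \pm w_i$, and then use the mixed-product (what the paper loosely calls ``distributive'') property of the Kronecker product to transfer those signs to $(\Pi\otimes I)(v_i\otimes w_i)$ and $(I\otimes\Pi)(v_i\otimes w_i)$, the four sign pairs giving the four relations. The only cosmetic difference is that the paper substitutes the symmetry first and then factors out the operators, while you apply the operators first and then substitute; the content is identical.
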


\begin{proof}
  Using the result in Lemma~\ref{lem:sym_path_eigvec}, $v_i$ and $w_i$ belong
  either to $S^+$ or $S^-$, that is, e.g., either $v_i = \Pi v_i$ or $v_i = -\Pi
  v_i$. Suppose that, for example, $v_i\in S^+$ and $w_i\in S^-$. Under this
  assumption $(v_i \otimes w_i) = (\Pi v_i \otimes w_i) = (\Pi v_i \otimes I
  w_i)$ and, using the distributive property of the Kronecker product $(v_i
  \otimes w_i) = (\Pi \otimes I)(v_i \otimes w_i)$. Also, $(v_i \otimes w_i) =
  (v_i \otimes -\Pi w_i) = (I v_i \otimes -\Pi w_i) = -(I\otimes \Pi)(v_i
  \otimes w_i)$. This gives the second of the four relations. The other three
  cases follow by the other three possible combinations of the symmetries of
  $v_i$ and $w_i$, thus concluding the proof.
\end{proof}

In the following we denote the set of vectors satisfying each one of the four
relations in the proposition respectively as $S^{++}$, $S^{+-}$, $S^{-+}$ and
$S^{--}$. A graphical representation of the four sets is given in
Figure~\ref{fig:single_subgrid}.
\begin{figure}[htbp]
  \centering \subfloat[\small symmetry class
  $S^{++}$]{\includegraphics[height=.2\linewidth]{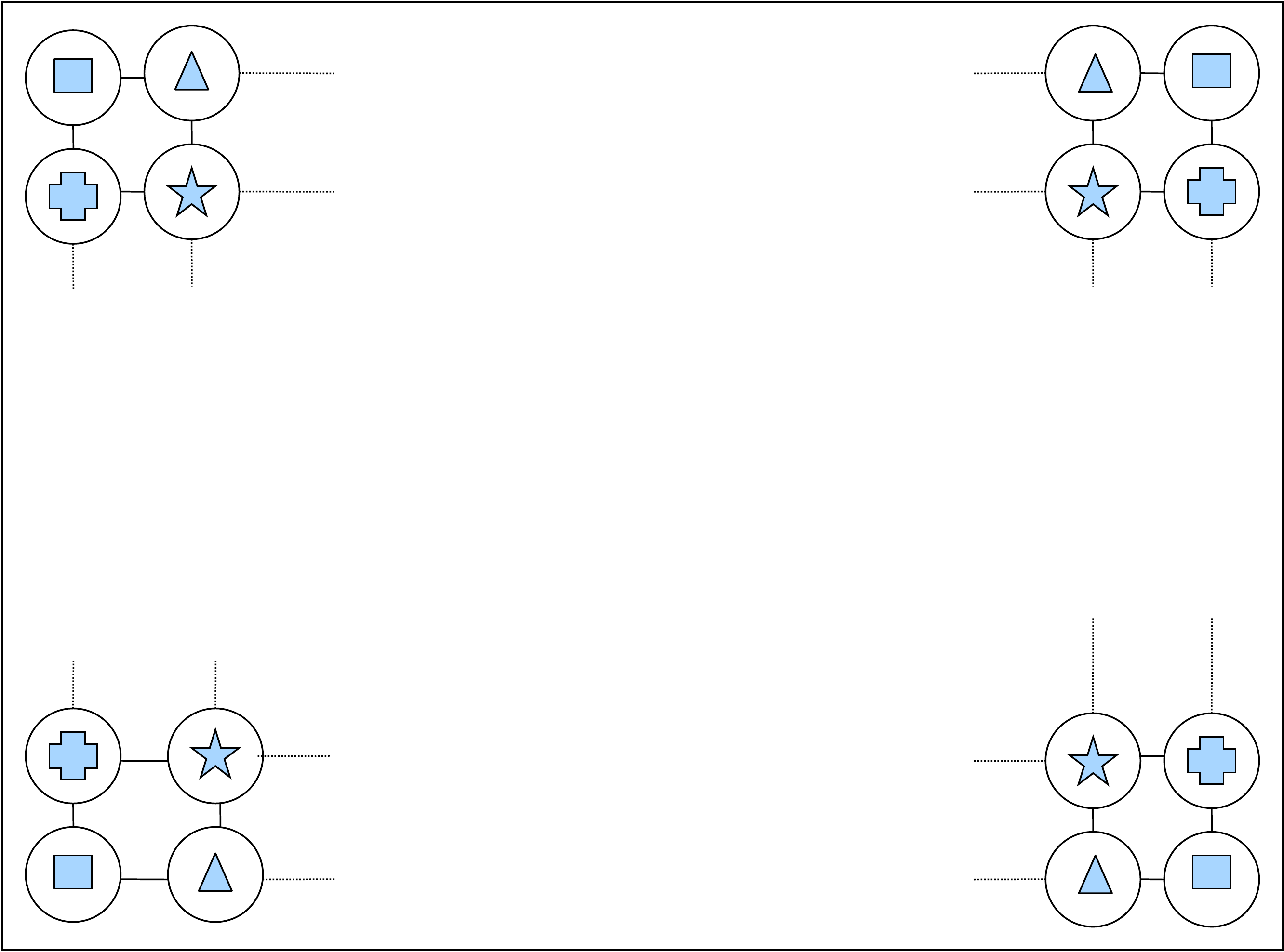}} \; %
  \subfloat[\small symmetry class
  $S^{+-}$]{\includegraphics[height=.2\linewidth]{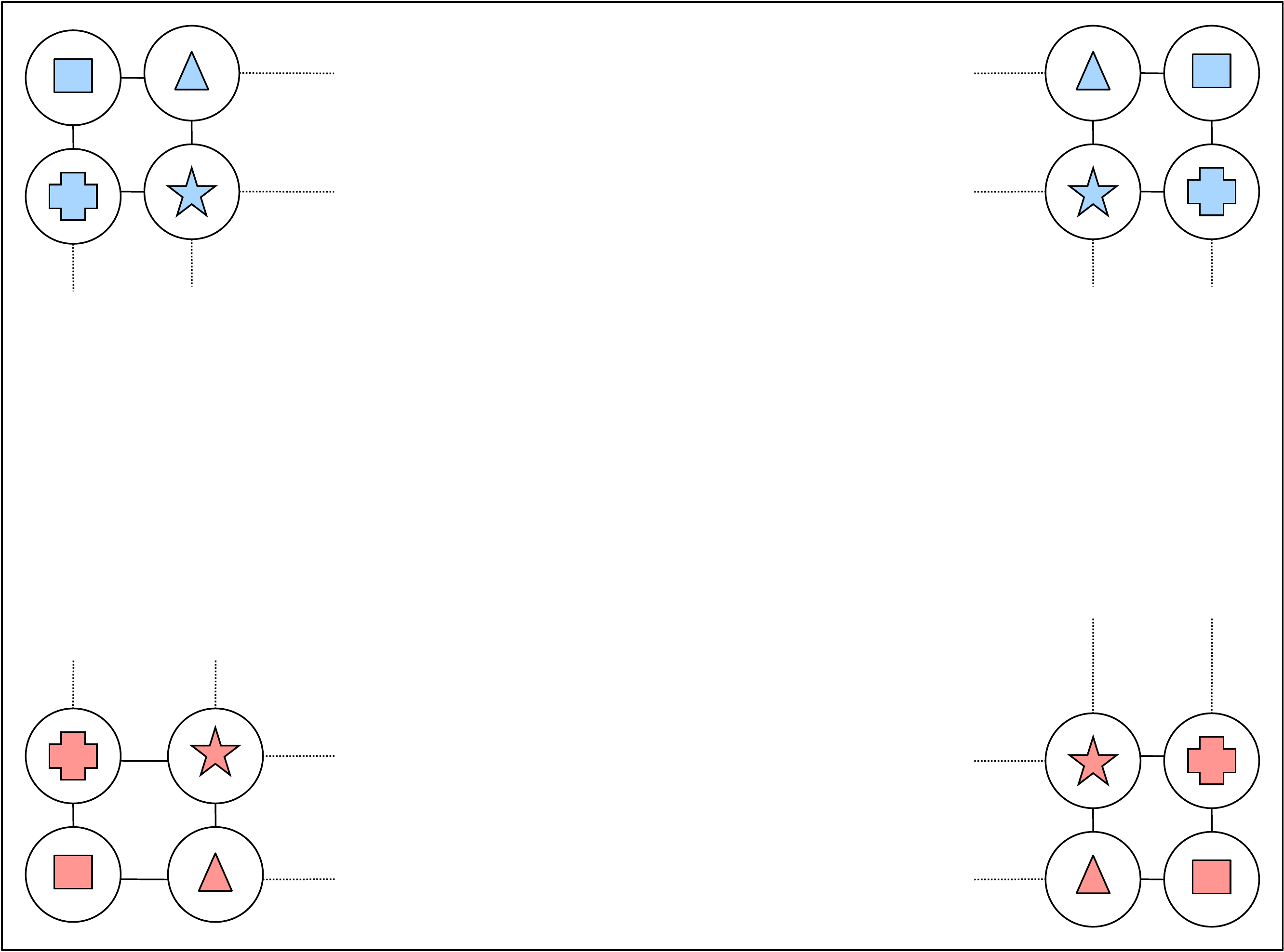}}\\[1.2ex]%
  \subfloat[\small symmetry class
  $S^{-+}$]{\includegraphics[height=.2\linewidth]{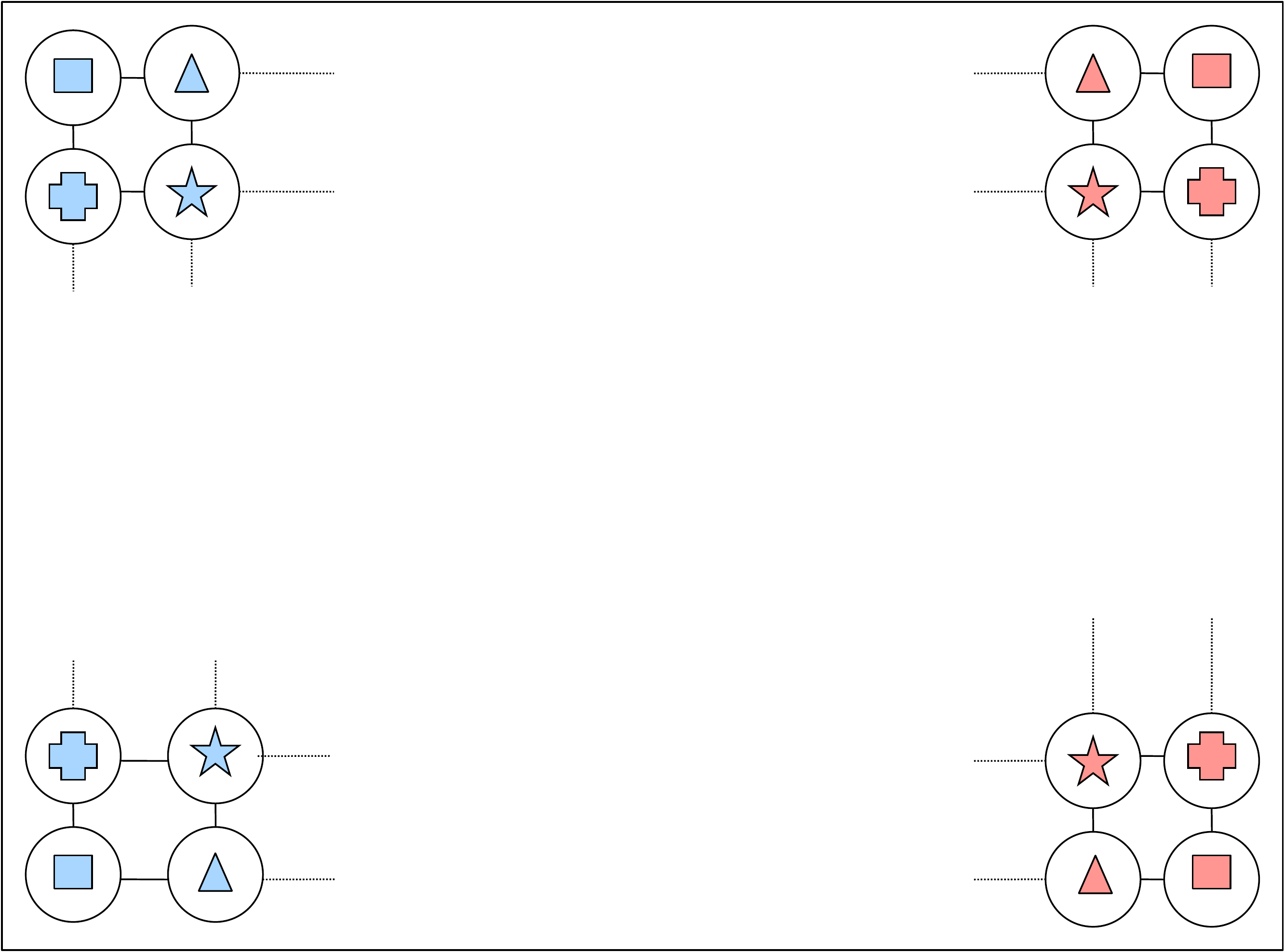}}\; %
  \subfloat[\small symmetry class
  $S^{--}$]{\includegraphics[height=.2\linewidth]{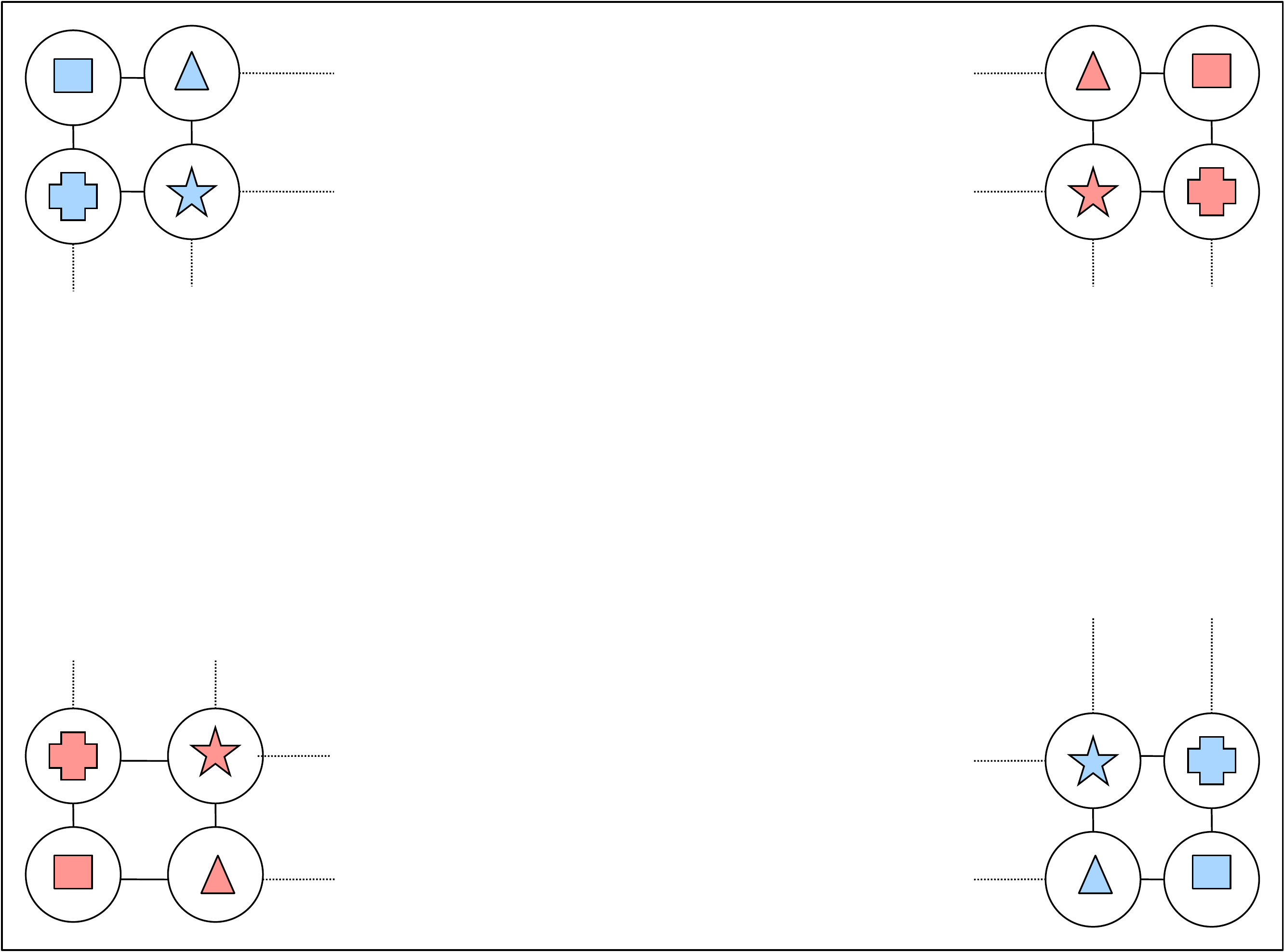}}%
  \caption{Graphical interpretation of Proposition~\ref{prop:subgrid_sym}}
  \label{fig:single_subgrid}
\end{figure}
We associate a symbol to each node depending on the value of the eigenvector
component. Also, we denote with the same symbol but different colors nodes that
have components of opposite sign. The result in
Proposition~\ref{prop:subgrid_sym} can be easily explained by using this
graphical interpretation. Namely, each of the four cases in the proposition
correspond to a scheme in Figure~\ref{fig:single_subgrid}.

The next remark gives an insight on the eigenvector components of the
``central'' nodes of a grid with odd dimensions.
\begin{remark}[Symmetries for grids with prime dimensions]
  If the grid has odd dimensions, $n_1$ and $n_2$, the above symmetries have
  interesting implications for the nodes with components respectively
  $[\frac{n_1+1}{2},\ell]$, $\ell\in\until{n_1}$, and $[\nu,\frac{n_2+1}{2}]$,
  $\nu\in\until{n_2}$. Indeed, the first set of components is zero for $S^{-+}$
  and $S^{--}$, while the second one is zero for $S^{+-}$ and $S^{--}$. \oprocend
\end{remark}

This proposition has an important impact on the symmetries of general
eigenvectors belonging to the same eigenspace (and in particular for each brick
of a general grid).
%
Clearly, if an eigenvalue is simple, then any associated eigenvector has the
structure of a Kronecker product and thus one of the four symmetries. For a
non-simple eigenvalue, any eigenvector of $V_\lambda$ can be written as a linear
combination of the basis vectors, and thus, using the proposition, by the sum of
at most four vectors each one having one of the four symmetries. Thus, in order
to identify the symmetries of a general vector, we need to ``suitably combine''
nodes with the same symbol and color in different classes.
Three cases are possible: (i) if basis vectors of at least three different
classes are present, by inspection in Figure~\ref{fig:single_subgrid}, no
symmetries are present, (ii) if all basis vectors belong to the same class, then
also the linear combination does, and (iii) if the basis vectors belong to two
of the four classes, a general eigenvector $v$ (obtained as their linear
combination) satisfies the following symmetries:
\begin{itemize}
\item[a)] $(v)_{[\nu, \ell]}=\pm (v)_{[n_1-\nu+1, \ell]}$ if the two classes share the first
  symbol (e.g., $S^{++}$ and $S^{+-}$);
\item[b)] $(v)_{[\nu, \ell]}=\pm (v)_{[\nu, n_2-\ell+1]}$ if the two classes share the
  second symbol (e.g., $S^{++}$ and $S^{-+}$);
\item[c)] $(v)_{[\nu, \ell]}=\pm (v)_{[n_1-\nu+1, n_2-\ell+1]}$ if the two
  classes share no symbol (e.g., $S^{++}$ and $S^{--}$).
\end{itemize}
A graphical representation of the above three symmetries is depicted in
Figure~\ref{fig:general_eigvecs_sym}. We associate the same symbol to nodes
having the same absolute value of the eigenvector component. It is worth noting
that we are interested in the absolute values because we want to classify all
the components that can be zeroed simultaneously.

\begin{figure}[htbp]
  \centering \subfloat[\small first symbol in common]{\includegraphics[height=.2\linewidth]{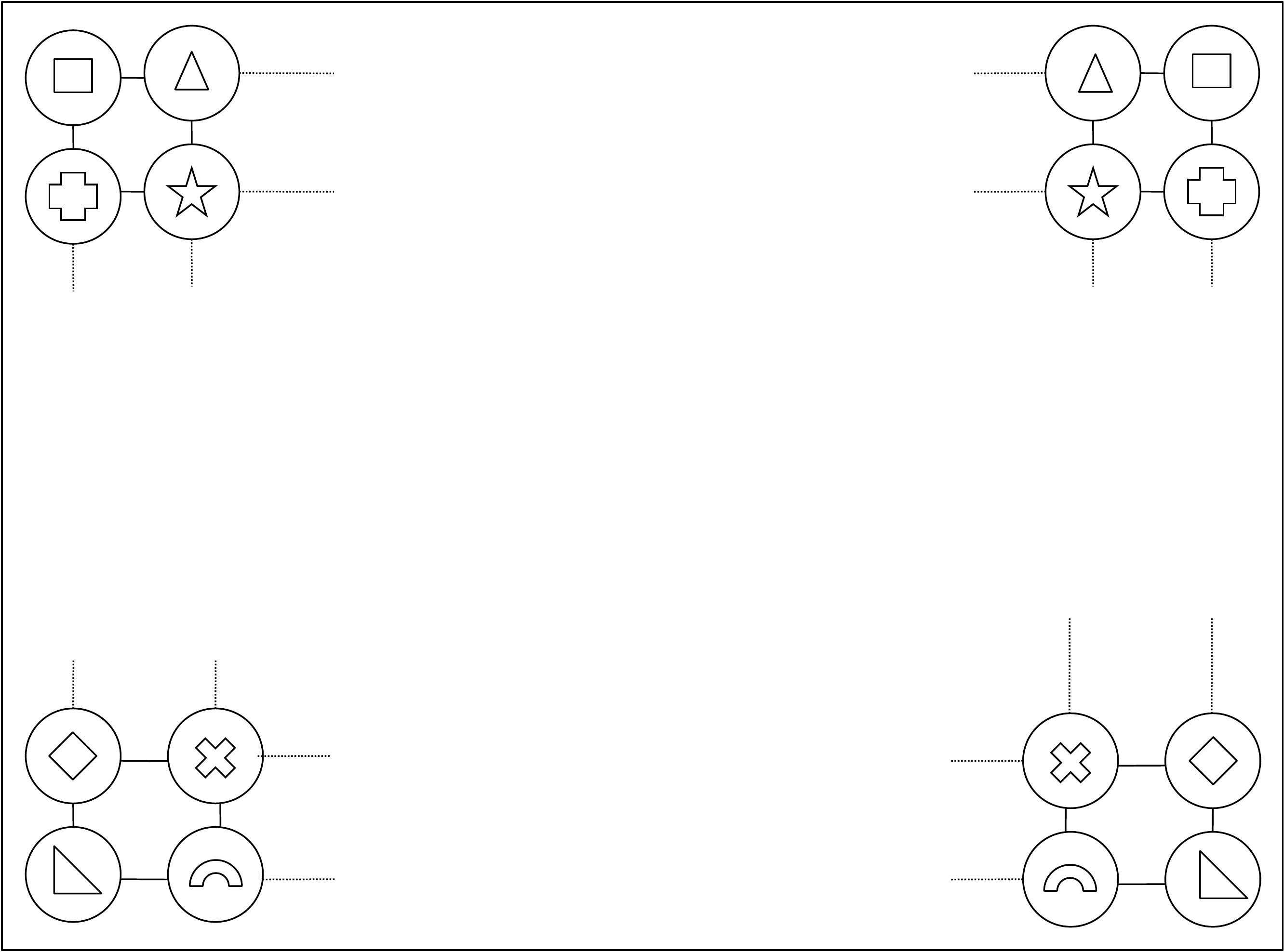}} \; %
  \subfloat[\small second symbol in common]{\includegraphics[height=.2\linewidth]{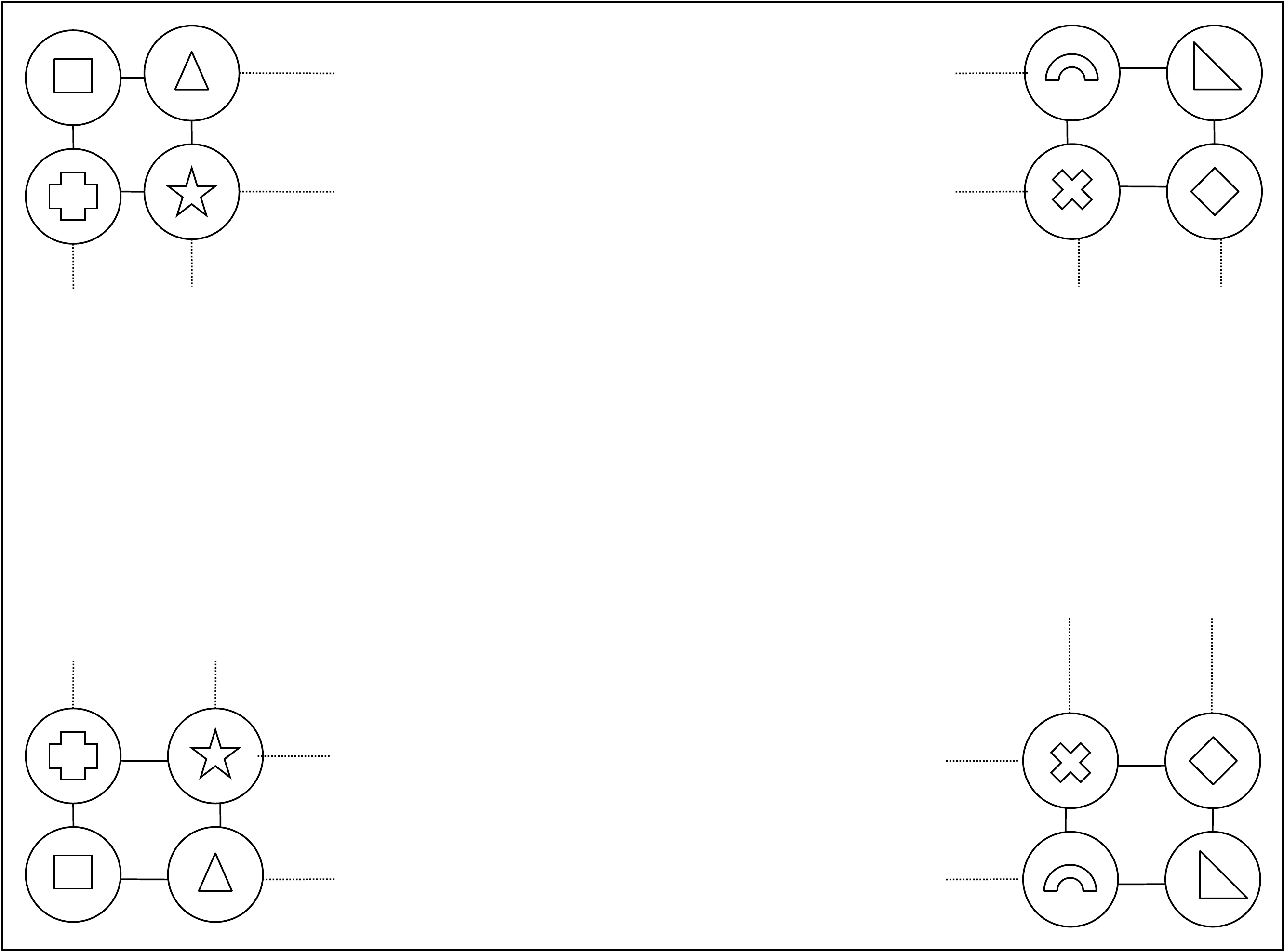}}\; %
  \subfloat[\small no symbol in common]{\includegraphics[height=.2\linewidth]{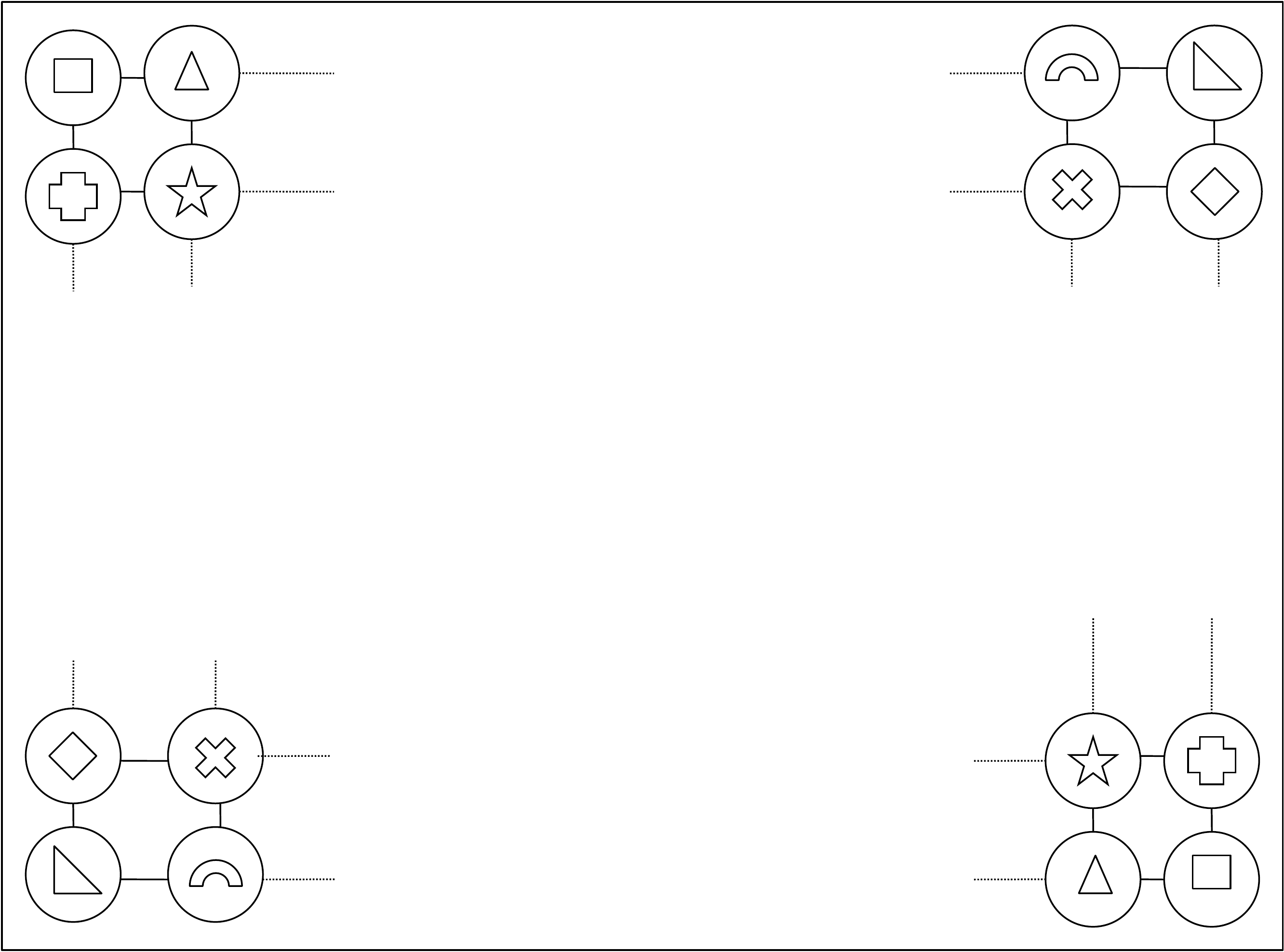}}\; %
  \caption{Symmetries of eigenvectors obtained as the sum of
  basis vectors of two different classes.}
  \label{fig:general_eigvecs_sym}
\end{figure}

\section{Controllability and observability analysis of general grid graphs}
\label{sec:general_grids_observ}
In this section we provide necessary and sufficient conditions to characterize
all and only the nodes from which the network system is controllable (observable).
First, we need a well known result in linear systems theory, see, e.g.,
\cite{PJA-ANM:94}. We state it for the controllability property.
\begin{lemma}
  \label{lmm:unobs_multi_eigs}
  If a state matrix $A\in\real^{n\times n}$, $n\in\natural$, has an eigenvalue
  with geometric multiplicity $\mu \in\natural$, then for any
  $B\in\real^{n\times m}$ with $m<\mu$ the pair $(A,B)$ is uncontrollable.\oprocend
\end{lemma}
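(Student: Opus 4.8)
The plan is to prove the claim directly by producing, under the hypothesis $m<\mu$, a nonzero \emph{left} eigenvector of $A$ that is annihilated by $B^T$, and then to invoke the Popov--Belevitch--Hautus eigenvector test for controllability in its general (non-symmetric) form, see~\cite{PJA-ANM:94}: the pair $(A,B)$ is controllable if and only if there is no nonzero $v$ with $A^T v = \lambda v$ and $B^T v = 0$. This is the natural counterpart of Lemma~\ref{lmm:PBH_eigvec}, which was stated only for symmetric $A$; here $A$ is arbitrary, so one works with left eigenvectors.

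First I would fix an eigenvalue $\lambda$ of $A$ with geometric multiplicity $\mu$ and introduce the left eigenspace $W = \ker(A^T - \lambda I)$. The preliminary observation is that $\dim W = \mu$: since rank is invariant under transposition, $\operatorname{rank}(A^T - \lambda I) = \operatorname{rank}(A - \lambda I)$, whence $\dim\ker(A^T-\lambda I) = n - \operatorname{rank}(A^T-\lambda I) = n - \operatorname{rank}(A-\lambda I) = \dim\ker(A-\lambda I) = \mu$. Thus the left eigenspace has the same dimension $\mu$ as the right eigenspace defining the geometric multiplicity.

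Next I would consider the linear map $\phi\colon W \to \real^m$ given by $\phi(v) = B^T v$, the restriction of left-multiplication by $B^T$ to $W$. Its image is a subspace of $\real^m$, so $\dim\operatorname{im}\phi \le m$, and the rank--nullity theorem gives $\dim\ker\phi = \mu - \dim\operatorname{im}\phi \ge \mu - m$. Since $m<\mu$, this is strictly positive, so $\ker\phi$ contains a nonzero vector $v$: a nonzero left eigenvector of $A$ associated with $\lambda$ satisfying $B^T v = 0$.

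Finally, such a $v$ witnesses the failure of the rank condition $\operatorname{rank}[\lambda I - A \mid B] = n$ at $\lambda$, so by the PBH test $(A,B)$ is uncontrollable, which is the claim. I expect the only delicate point to be the identification $\dim W = \mu$, i.e. that the geometric multiplicity (defined via the right eigenspace) equals the dimension of the left eigenspace; once rank-transpose invariance is noted this is immediate, and the dimension count that follows is routine.
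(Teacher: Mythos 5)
Your proposal is correct, but there is nothing in the paper to compare it against: the paper states Lemma~\ref{lmm:unobs_multi_eigs} as a well-known fact, closes it with the end-of-proof symbol, and defers to the literature (\cite{PJA-ANM:94}) rather than proving it. Your argument is the standard textbook proof and it is sound: rank-transpose invariance gives $\dim\ker(A^T-\lambda I)=\dim\ker(A-\lambda I)=\mu$, rank--nullity applied to $v\mapsto B^T v$ restricted to the left eigenspace produces a nonzero left eigenvector $v$ with $B^T v=0$ whenever $m<\mu$, and such a $v$ kills the rank of $[\lambda I - A \mid B]$, so the general PBH test fails. Two small remarks. First, since the lemma allows an arbitrary (non-symmetric) real $A$, the eigenvalue $\lambda$ may be non-real, so the eigenspace $W$ and the map $\phi$ should be taken over $\mathbb{C}$ (codomain $\mathbb{C}^m$, not $\real^m$); the dimension count is unchanged, so this is a one-word repair rather than a gap. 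Second, in the only setting where the paper uses the lemma, $A=L$ is symmetric, so left and right eigenvectors coincide and your argument collapses to an even shorter one that needs only the paper's own Lemma~\ref{lmm:PBH_eigvec}: the eigenspace $V_\lambda$ has dimension $\mu>m$, hence the linear map $v\mapsto B^Tv$ on $V_\lambda$ has a nontrivial kernel, exhibiting an unreachable eigenvector directly. Your version is more general (it covers the non-symmetric case the lemma actually asserts), at the cost of invoking the PBH rank test in its general form rather than the symmetric-matrix form stated in the paper.
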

The previous lemma applied to the grid Laplacian says that, in case the grid is
non simple with maximum eigenvalue multiplicity $\mu$, then the grid is not
controllable (observable) from a set of control (observation) nodes of cardinality less than $\mu$.

Using Lemma~\ref{lmm:PBH_eigvec}, it follows straight that we can study the
controllability (observability) properties of the grid separately for each eigenvalue. Namely, to
guarantee controllability (observability), we need to show that for each eigenvalue of the grid
Laplacian $L$, there does not exist any eigenvector satisfying the condition in
\eqref{eq:PBH_eigvec_reach_obs}, i.e. having zero in some components.


If $\lambda$ is simple, the corresponding eigenspace $V_\lambda$ in
\eqref{eq:V_lambda} is given by $V_\lambda = \{ v \in \real^{n_1 \cdot n_2} | v
= \alpha_1 (v_{1} \otimes w_{1}), \alpha_1 \in \real \}.$ Thus, finding the
zeros of any eigenvector in $V_\lambda$ is equivalent to finding the zeros of
the eigenvectors $v_1$ and $w_1$ and propagate them according to the Kronecker
product structure.
Clearly, with this observation in hand, the analysis of any simple eigenvalue
can be performed by using the tools provided in
Section~\ref{sec:simple_grids}.

For eigenvalues with multiplicity greater than one, next two considerations are
important. First, not all the eigenvectors of $\lambda$ have the structure of a
Kronecker product. Second, consistently with Lemma~\ref{lmm:unobs_multi_eigs},
it is always possible to find an eigenvector $v\in V_{\lambda}$ with an
arbitrary component equal to zero, for a suitable choice of the coefficients
$\alpha_i$ in
\eqref{eq:V_lambda}. %
Thus, the controllability (observability) analysis does not depend only on the zero components of
the path eigenvectors, but also on the symmetries in the grid eigenvector
components. That is, for the eigenvalue under investigation, we want to answer
to the following question. If we find an eigenvector with zero in an arbitrary
component $\ell$, what are all and only the other components that are zero in
the chosen eigenvector? We provide the analysis for non-simple eigenvalues of
multiplicity two and discuss the generalization in a remark.

On the basis of the eigenvector symmetries identified in
Theorem~\ref{thm:subgrid_partition}, we can study the controllability
(observability) of a brick. 

Next lemma provides useful properties of the eigenvector components in a brick.
%
\begin{lemma}[Polynomial structure of the eigenvector components]
  \label{lmm:poly_components}
  Let $G_0 = P_{n_1} \Box P_{n_2}$ be a grid of dimension $n_1\times n_2$. Then,
  any Laplacian eigenvector $u = v \otimes w$ of the grid, with $v$ and $w$
  respectively eigenvectors of $P_{n_1}$ and $P_{n_2}$ associated to eigenvalues
  $\lambda_v$ and $\lambda_w$, has components $(u)_{[\nu,\ell]}$,
  $\nu\in\until{n_1}$ and $\ell\in\until{n_2}$ satisfying
  \begin{enumerate}
  \item $(u)_{[\nu,\ell]} = p_\nu(\lambda_v) \cdot p_{\ell}(\lambda_w) \cdot (v)_1
    \cdot (w)_1$, where $p_r(s)$ is the polynomial of degree $(r-1)$ defined as
    $p_2(s) = 1-s$ for $r=2$ and, denoting $p_1(s) = 1$, by the recursion
    \begin{equation}
      \label{eq:poly}
      p_r(s) = (2-s) p_{r-1}(s) - p_{r-2}(s)
    \end{equation}
    for $r\geq 3$;
  \item if $n_1$ and $n_2$ prime, then $p_\nu(\lambda_v) \neq 0$ and
    $p_{\ell}(\lambda_w) \neq 0$ for any $\nu\in\until{\frac{n_1-1}{2}}$ and
    $\ell\in\until{\frac{n_2-1}{2}}$.
  \end{enumerate}
\end{lemma}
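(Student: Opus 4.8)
The plan is to reduce both statements to a one-dimensional fact about the components of path eigenvectors and then analyze the resulting family of polynomials.

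For statement~(i), I would first exploit the Kronecker structure of $u=v\otimes w$, which by the component-indexing convention gives $(u)_{[\nu,\ell]}=(v)_\nu\,(w)_\ell$. It therefore suffices to prove the one-dimensional identity $(v)_\nu=p_\nu(\lambda_v)\,(v)_1$ for an arbitrary path eigenvector $v$ of $P_{n_1}$, and symmetrically $(w)_\ell=p_\ell(\lambda_w)\,(w)_1$. To obtain this I would write the eigenvalue equation $L_{n_1}v=\lambda_v v$ row by row: recalling that $P_{n_1}$ has degree-one external nodes and degree-two internal nodes, the first row gives $(v)_2=(1-\lambda_v)(v)_1=p_2(\lambda_v)(v)_1$, while the equation at any internal node $r$ reads $(v)_{r+1}=(2-\lambda_v)(v)_r-(v)_{r-1}$, which is exactly the recursion \eqref{eq:poly}. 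A straightforward induction on $\nu$ then yields $(v)_\nu=p_\nu(\lambda_v)(v)_1$, and multiplying the two identities produces~(i). The degree claim follows from the same recursion, since multiplication by $(2-s)$ raises the degree by one while $p_{r-2}$ has strictly smaller degree, so $\deg p_r=r-1$ by induction from $\deg p_1=0$, $\deg p_2=1$.

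For statement~(ii) the essential ingredient is a closed form for $p_\nu$ at a path eigenvalue. By the path eigenstructure (Theorem~\ref{thm:main_thm_path} in the Appendix) the eigenvalues of $P_{n_1}$ are $\lambda=2-2\cos\theta$ with $\theta=\frac{(k-1)\pi}{n_1}$, $k\in\until{n_1}$. Substituting $2-s=2\cos\theta$ turns \eqref{eq:poly} into the Chebyshev-type recursion $p_r=2\cos\theta\,p_{r-1}-p_{r-2}$, and I would check, using the product-to-sum identity, that
\[
p_\nu(\lambda)=\frac{\cos\!\big((2\nu-1)\tfrac{\theta}{2}\big)}{\cos\!\big(\tfrac{\theta}{2}\big)}
\]
satisfies the same recursion and the initial data $p_1=1$, $p_2=1-s$, hence coincides with $p_\nu$. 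Consequently $p_\nu(\lambda)=0$ precisely when $\cos\big((2\nu-1)\frac{(k-1)\pi}{2n_1}\big)=0$, i.e.\ when $(2\nu-1)(k-1)$ is an odd multiple of $n_1$; in particular $n_1$ must divide $(2\nu-1)(k-1)$.

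The argument then closes with a short number-theoretic step, which I expect to be the only genuinely substantive part. For $\nu\in\until{\frac{n_1-1}{2}}$ one has $1\le 2\nu-1\le n_1-2<n_1$, so primality of $n_1$ gives $\gcd(2\nu-1,n_1)=1$. For $k=1$ we have $\lambda=0$ and $p_\nu(0)=1\neq0$; for $2\le k\le n_1$ we have $1\le k-1\le n_1-1$, hence $n_1\nmid(k-1)$, and since also $n_1\nmid(2\nu-1)$, primality forces $n_1\nmid(2\nu-1)(k-1)$. Thus $p_\nu(\lambda)\neq0$ for every eigenvalue $\lambda$ of $P_{n_1}$, and the identical reasoning for $n_2$ and $\ell$ gives $p_\ell(\lambda_w)\neq0$. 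I would finally remark that the restriction $\nu\le\frac{n_1-1}{2}$ is essential, since for the central index $\nu=\frac{n_1+1}{2}$ one has $2\nu-1=n_1$ and $p_\nu$ does vanish at the eigenvalues with $k-1$ odd, matching the central-node symmetries noted earlier.
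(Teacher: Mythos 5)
Your proof is correct. Part (i) follows exactly the paper's route: the componentwise identity $(u)_{[\nu,\ell]}=(v)_\nu (w)_\ell$, then induction on the rows of $L_{n_1}v=\lambda_v v$ to get $(v)_\nu = p_\nu(\lambda_v)(v)_1$ via the recursion \eqref{eq:poly}. For part (ii), however, you take a genuinely different and more self-contained route. The paper disposes of (ii) in one line by citing the earlier path-graph paper \cite{GP-GN:10b}, where it is shown that for $n_1$ prime every path eigenvector has nonzero components $(v)_1,\ldots,(v)_{(n_1-1)/2}$. You instead re-derive this fact from scratch: substituting $2-s=2\cos\theta$ turns \eqref{eq:poly} into a Chebyshev-type recursion, whose solution $p_\nu(\lambda)=\cos\bigl((2\nu-1)\tfrac{\theta}{2}\bigr)/\cos\bigl(\tfrac{\theta}{2}\bigr)$ (well defined since $\theta=(k-1)\pi/n_1<\pi$) you verify against the initial data, and then a primality argument shows $n_1$ cannot divide $(2\nu-1)(k-1)$ when $2\nu-1\le n_1-2$ and $1\le k-1\le n_1-1$, with the $k=1$ case handled by $p_\nu(0)=1$. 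What your approach buys is independence from the external reference, an explicit description of exactly where the zeros of $p_\nu$ at path eigenvalues occur, and a proof that the bound $\nu\le\frac{n_1-1}{2}$ is sharp (the central index $\nu=\frac{n_1+1}{2}$ gives $2\nu-1=n_1$, recovering the central-node symmetry zeros); what the paper's citation buys is brevity. Both arguments are sound and establish the stated claim.
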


\begin{proof}
  First, notice that $(u)_{[\nu,\ell]} = (v)_\nu \cdot (w)_\ell$. To prove
  statement~(i), we need to prove that for a path of length $n_1$, any
  eigenvector $v$ satisfies $(v)_\nu = p_\nu(\lambda_v) (v)_1$ for $\nu\in
  \until{n_1}$. We prove the statement by induction. We exploit the eigenvector
  relation $L_{n_1} v = \lambda_v v$ by using the structure of the path
  Laplacian given in Appendix. From the first row, it follows that $(v)_1 -
  (v)_2 = \lambda_v (v)_1$, so that $(v)_2 = (1-\lambda_v) (v)_1$.  Then, from
  the $r$th row of the relation, we have $-(v)_{r-1} + 2 (v)_r - (v)_{r+1} =
  \lambda_v (v)_r$ and thus $(v)_{r+1} = (2-\lambda_v) (v)_r -
  (v)_{r-1}$. Plugging in the inductive assumption $(v)_r = p_r(\lambda_v)
  (v)_1$ and $(v)_{r-1} = p_{r-1}(\lambda_v) (v)_1$, we have $(v)_{r+1} =
  p_{r+1}(\lambda_v) (v)_1$ with $p_{r+1}(\lambda_v) = (2-\lambda_v)
  p_{r}(\lambda_v) - p_{r-1}(\lambda_v)$ which concludes the first part of the
  proof.
	
  Statement~(ii) can be proven by showing that, for a path graph of length $n_1$
  with $n_1$ prime, any eigenvector $v$ has non zero components $(v)_1$,
  $\ldots$, $(v)_{\frac{n_1-1}{2}}$. This result is proven in \cite{GP-GN:10b},
  thus concluding the proof.
\end{proof}

Next theorem gives necessary and sufficient conditions for two eigenvector
components to be both zero in a brick. 
\begin{theorem}[Simultaneous zeroing of eigenvector components]
  \label{thm:poly_ratios}
  Let $G_0 = P_{n_1} \Box P_{n_2}$ be a grid of dimension $n_1\times n_2$. Let
  $\lambda = \lambda_{1,1} + \lambda_{1,2} = \lambda_{2,1} + \lambda_{2,2}$ be
  an eigenvalue of multiplicity two, with $\lambda_{1,1}$ and $\lambda_{2,1}$
  ($\lambda_{1,2}$ and $\lambda_{2,2}$) eigenvalues of $P_{n_1}$
  ($P_{n_2}$). Let $V_{\lambda}$ be the associated eigenspace. Then there exists
  an eigenvector $v\in V_\lambda$ with zero components $[\nu_1, \ell_1]$ and
  $[\nu_2, \ell_2]$, $\nu_1,\nu_2\in\until{n_1}$ and
  $\ell_1,\ell_2\in\until{n_2}$, if and only if
  \begin{equation}
    p_{\nu_1}(\lambda_{1,1}) \cdot p_{\ell_1}(\lambda_{1,2}) \cdot p_{\nu_2}(\lambda_{2,1}) \cdot
    p_{\ell_2}(\lambda_{2,2}) = \; p_{\nu_1}(\lambda_{2,1}) \cdot
    p_{\ell_1}(\lambda_{2,2}) \cdot
    p_{\nu_2}(\lambda_{1,1}) \cdot \; p_{\ell_2}(\lambda_{1,2}), 
    \label{eq:poly_determ}
  \end{equation}
  where $p_r(s)$ is the polynomial of degree $r-1$ defined by the recursion in
  equation \eqref{eq:poly}.
\end{theorem}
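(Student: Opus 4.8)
The plan is to work directly with a general eigenvector of the two-dimensional eigenspace $V_\lambda$ and impose the two zeroing conditions, turning the existence question into a solvability question for a homogeneous linear system in the two free coefficients. Since $\lambda$ has multiplicity two, by Lemma~\ref{lmm:eigstruc_cartesian_prod} a basis of $V_\lambda$ is given by the two Kronecker products $v_{(1)} \otimes w_{(1)}$ and $v_{(2)} \otimes w_{(2)}$, where $v_{(k)}$ is an eigenvector of $P_{n_1}$ for $\lambda_{k,1}$ and $w_{(k)}$ is an eigenvector of $P_{n_2}$ for $\lambda_{k,2}$. Thus any $v \in V_\lambda$ has the form $v = \alpha_1\, v_{(1)} \otimes w_{(1)} + \alpha_2\, v_{(2)} \otimes w_{(2)}$ with $\alpha_1,\alpha_2 \in \real$.

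First I would use Lemma~\ref{lmm:poly_components}~(i) to write each relevant component of the two basis vectors explicitly in polynomial form. Normalizing so that $(v_{(k)})_1 = (w_{(k)})_1 = 1$ (which I may do since eigenvectors are determined up to scale and this only rescales $\alpha_k$), the component $[\nu,\ell]$ of the $k$th basis vector is exactly $p_\nu(\lambda_{k,1})\, p_\ell(\lambda_{k,2})$. The two zeroing conditions $(v)_{[\nu_1,\ell_1]} = 0$ and $(v)_{[\nu_2,\ell_2]} = 0$ then become the homogeneous $2\times 2$ linear system
\[
\begin{bmatrix}
p_{\nu_1}(\lambda_{1,1})\, p_{\ell_1}(\lambda_{1,2}) & p_{\nu_1}(\lambda_{2,1})\, p_{\ell_1}(\lambda_{2,2})\\[0.6ex]
p_{\nu_2}(\lambda_{1,1})\, p_{\ell_2}(\lambda_{1,2}) & p_{\nu_2}(\lambda_{2,1})\, p_{\ell_2}(\lambda_{2,2})
\end{bmatrix}
\begin{bmatrix}\alpha_1\\ \alpha_2\end{bmatrix}
= \begin{bmatrix}0\\ 0\end{bmatrix}.
\]
A nonzero eigenvector $v$ with both prescribed components zero exists if and only if this system has a nontrivial solution $(\alpha_1,\alpha_2)\neq (0,0)$, which happens precisely when the $2\times 2$ coefficient determinant vanishes. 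Expanding that determinant gives exactly the claimed identity \eqref{eq:poly_determ}, completing both directions at once.

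The one point that needs care — and which I expect to be the main obstacle — is the logical equivalence between ``there exists a nonzero $v\in V_\lambda$ with the two components zero'' and ``the coefficient matrix is singular.'' The forward and reverse implications for a homogeneous $2\times 2$ system are standard, but I must confirm that a nontrivial coefficient pair $(\alpha_1,\alpha_2)$ indeed yields a \emph{nonzero} vector $v$, i.e. that the two basis vectors $v_{(1)}\otimes w_{(1)}$ and $v_{(2)}\otimes w_{(2)}$ are genuinely linearly independent so that no nontrivial combination collapses to the zero vector. This is guaranteed because they form a basis of the two-dimensional eigenspace $V_\lambda$ (Lemma~\ref{lmm:eigstruc_cartesian_prod}), so any $(\alpha_1,\alpha_2)\neq(0,0)$ produces $v\neq 0$. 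With this independence secured, the determinant condition is both necessary and sufficient, and the remaining work is the purely mechanical expansion of the $2\times2$ determinant into the product form stated in \eqref{eq:poly_determ}.
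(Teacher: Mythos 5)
Your proposal is correct and follows essentially the same route as the paper's own proof: expand a general element of $V_\lambda$ in the Kronecker-product basis from Lemma~\ref{lmm:eigstruc_cartesian_prod}, rewrite the two zeroing conditions via Lemma~\ref{lmm:poly_components}~(i) as a homogeneous $2\times 2$ system in $(\alpha_1,\alpha_2)$, and characterize nontrivial solvability by the vanishing of the determinant, which is exactly \eqref{eq:poly_determ}. Your added remark that linear independence of the basis vectors guarantees the resulting $v$ is nonzero (and, implicitly, that the normalization $(v_{(k)})_1=(w_{(k)})_1=1$ is licit because path eigenvectors have nonzero first components) makes explicit a point the paper leaves tacit.
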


\begin{proof}
  To prove the statement, we look for an eigenvector $v = \alpha_1 (v_1 \otimes
  w_1) + \alpha_2 (v_2 \otimes w_2)$ with $\alpha_1$ and $\alpha_2$ such that
  $(v)_{[\nu_1,\ell_1]} = 0$ and $(v)_{[\nu_2,\ell_2]} = 0$. The condition
  $(v)_{[\nu_1,\ell_1]} = 0$ is equivalent to $\alpha_1 (v_1)_{\nu_1}
  (w_1)_{\ell_1} + \alpha_2 (v_2)_{\nu_1} (w_2)_{\ell_1} = 0$. From
  Lemma~\ref{lmm:poly_components} (i) we have $\alpha_1\,
  p_{\nu_1}(\lambda_{1,1})\cdot (v_1)_{1} \cdot p_{\ell_1}(\lambda_{2,1}) \cdot
  (w_1)_{1} + \alpha_2\, p_{\nu_1}(\lambda_{1,2}) \cdot (v_2)_{1} \cdot
  p_{\ell_1}(\lambda_{2,2}) \cdot (w_2)_{1} = 0$. Using the same calculations
  for the condition $(v)_{[\nu_2,\ell_2]} = 0$, we can write the matrix equation
  \begin{equation}
    \begin{bmatrix}
      p_{\nu_1}(\lambda_{1,1}) p_{\ell_1}(\lambda_{1,2}) (v_1)_1 (w_1)_1 \; & \;
      p_{\nu_1}(\lambda_{2,1}) p_{\ell_1}(\lambda_{2,2}) (v_2)_1
      (w_2)_1\\[1.2ex]
      p_{\nu_2}(\lambda_{1,1}) p_{\ell_2}(\lambda_{1,2}) (v_1)_1 (w_1)_1 \; & \;
      p_{\nu_2}(\lambda_{2,1}) p_{\ell_2}(\lambda_{2,2}) (v_2)_1 (w_2)_1
     \end{bmatrix} 
     \begin{bmatrix}
       \alpha_1\\
       \alpha_2
     \end{bmatrix}
     = 0.
     \label{eq:poly_matrix}
   \end{equation}
   Since $v$ is an eigenvector, $\alpha_1$ and $\alpha_2$ can not be zero
  simultaneously. Thus, the above equation is satisfied if and only if the
  matrix is singular. Imposing the condition that the determinant be zero gives
  equation~\eqref{eq:poly_determ}.
\end{proof}

\begin{remark}[Extensions to higher grid dimension and eigenvalue multiplicity]
  If the grid is of dimension $d >2$ the above theorem extends in a
  straightforward manner. Indeed, the result in Lemma~\ref{lmm:poly_components}
  (i) can be easily generalized as $(u)_{[i_1, \ldots, i_d]} =
  p_{i_1}(\lambda_{i_1}) \cdot \ldots \cdot p_{i_d}(\lambda_{i_d}) \cdot (v_1)_1
  \cdot \ldots \cdot (v_d)_1$, with suitable adaptation of the notation, so that
  the condition in equation~\eqref{eq:poly_determ} follows straight.

  If the grid has an eigenvalue of multiplicity $\mu > 2$, then the theorem
  generalizes by considering $\mu$ nodes. The condition in
  equation~\eqref{eq:poly_determ} follows by setting to zero the determinant of
  a $\mu\times \mu$ version of the matrix in \eqref{eq:poly_matrix} with
  elements $(i,j)\in\until{\mu}\times\until{\mu}$ given by $p_{\nu_i}(\lambda_{j,1})
  p_{\ell_i}(\lambda_{j,2}) (v_j)_1 (w_j)_1$. \oprocend
\end{remark}

Next, we show a graphical interpretation of the controllability (observability) results obtained
by combining the results of Theorem~\ref{thm:main_thm_simple_grid},
Theorem~\ref{thm:subgrid_partition}, Proposition~\ref{prop:subgrid_sym} and
Theorem~\ref{thm:poly_ratios}. We present it through an example. In
Figure~\ref{fig:4x6} we show a two dimensional grid of length $4\times 6$. 
The analysis for the simple eigenvalues can be performed as explained in
Section~\ref{sec:simple_grids}. This gives the cross symbol in the set of nodes
$[2,i]$ and $[5,i]$, $i\in\until{4}$ in Figure~\ref{fig:4x6}~(b).
Then, we partition the grid into bricks of dimensions $2\times 2$ and $2\times 3$. The
eigenvalue $\lambda_1 = 2$ (respectively $\lambda_2 = 3$) is an eigenvalue of
multiplicity two in the brick $2\times 2$ ($2\times 3$). The eigenvectors
generating $V_{\lambda_1}$ ($V_{\lambda_2}$) belong to $S^{+-}$ and $S^{-+}$
($S^{++}$ and $S^{--}$). This gives the symmetries in Figure~\ref{fig:4x6}~(a)
according to Proposition~\ref{prop:subgrid_sym} and the subsequent discussion.
Using Theorem~\ref{thm:poly_ratios} it is easy to verify that all different
symbols in Figure~\ref{fig:4x6}~(a) correspond, in fact, to distinct component values.
Replicating the brick symbols according to
Theorem~\ref{thm:subgrid_partition} we get the structure in
Figure~\ref{fig:4x6}~(b).
Notice that in this particular case we have used
the same cross symbol both for the non-simple eigenvalue $\lambda=3$ and for the
simple eigenvalues.
Finally, it can be easily tested that $\lambda_1 = 2$ and $\lambda_2 = 3$ are
the only two non-simple eigenvalues.
Given a set of control (observation) nodes, the grid is controllable (observable)
if and only if the nodes do not have any symbol in common. If, for example, the
control (observation) nodes share the top symbol, then the eigenvalue $\lambda = 2$ (of
the brick $2\times 2$) is uncontrollable (unobservable).
As for the simple case we let the reader play with the rule.

\begin{figure}[htbp]
  \centering \subfloat[bricks $2$x$2$ \&
  $2$x$3$]{\includegraphics[height=.3\linewidth]{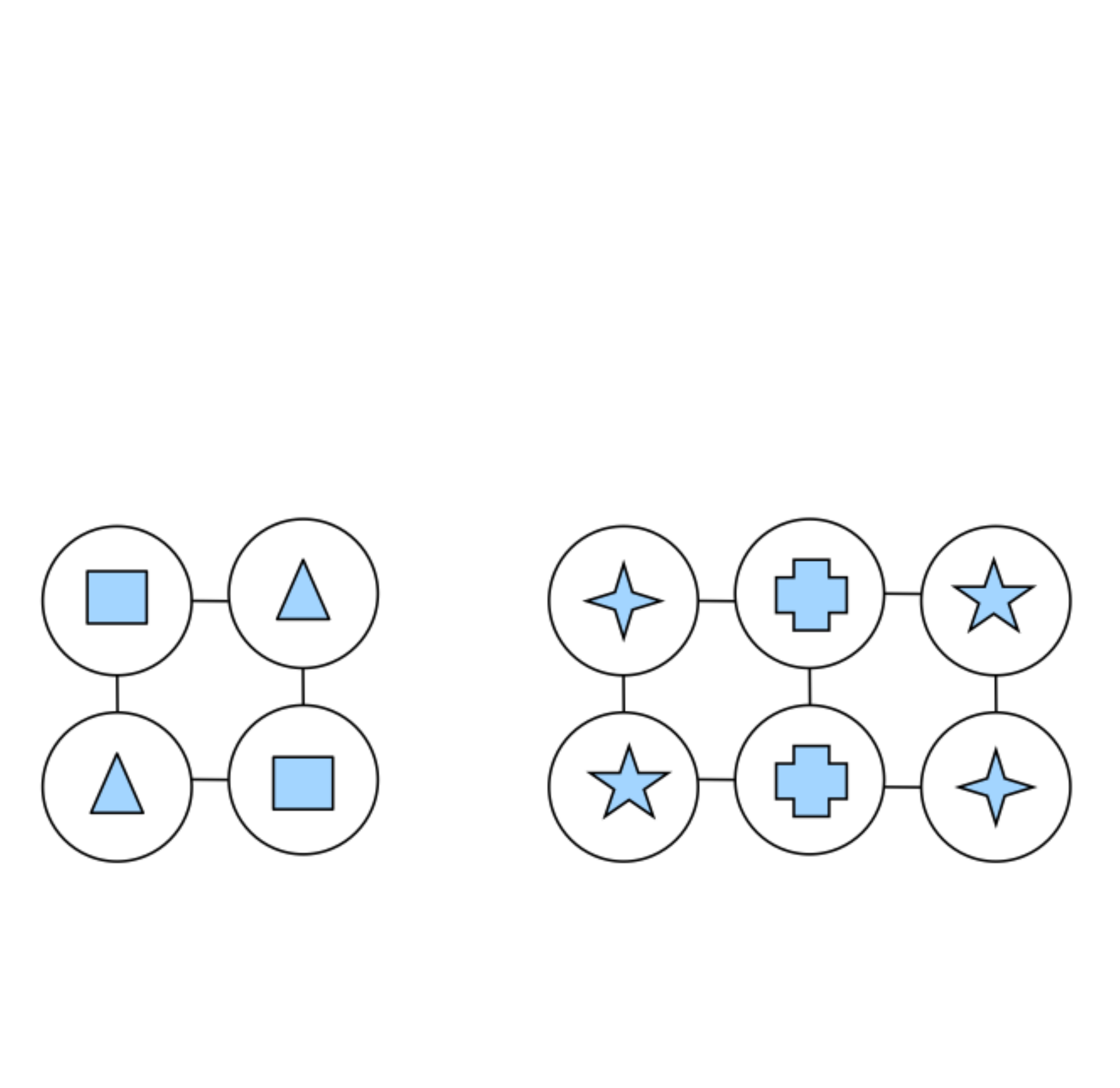}}
  \hspace{1cm}
  \subfloat[brick
  partition]{\includegraphics[height=.3\linewidth]{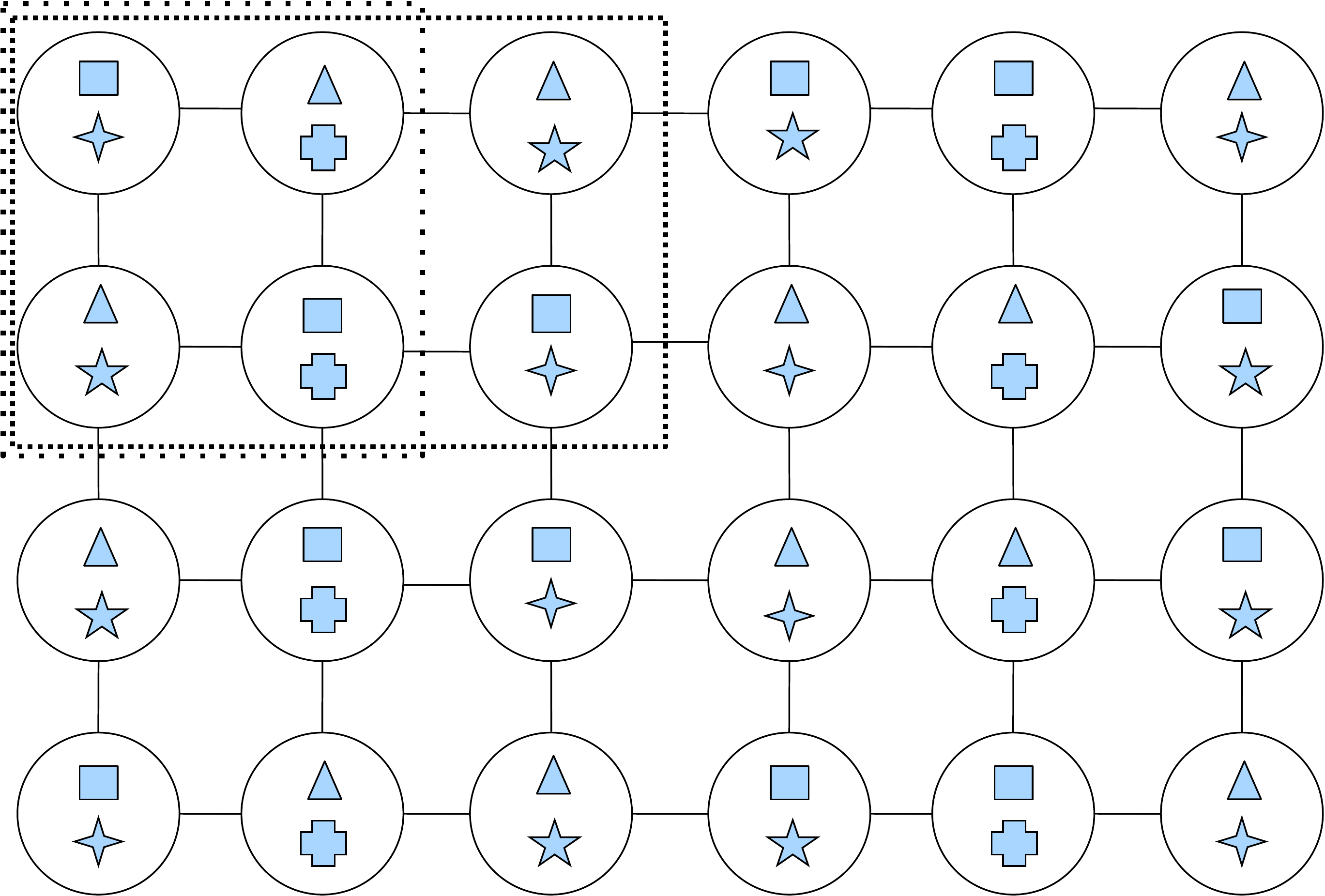}}%
  \caption{Graphical interpretation of the controllability and observability analysis.}
  \label{fig:4x6}
\end{figure}

To conclude, we provide a discussion on the importance and the effectiveness of
the proposed methodologies in studying the controllability and observability (in
general the dynamics) of grid graph induced systems. First, we want to stress
the fact that the proposed tools give strong insights on the structure and
symmetries of the grid eigenvectors and of the controllable (unobservable)
subspaces, as shown, e.g., in the above example. Furthermore, the proposed tools
represent, clearly, an effective alternative to the standard tests in checking
the controllability and observability properties. Notice that, the PBH test in
Lemma~\ref{lmm:PBH_eigvec} (to be performed for each eigenvalue) becomes
prohibitive as the dimensions of the grid grow.  Similarly, inspecting the rank
of the controllability (observability) matrix is an operation that is
ill-conditioned as the matrix dimension grows. As opposed to it, our tools
involve the following operations. The test on the simple eigenvalues can be done
simultaneously by using the tools in Section~\ref{sec:simple_grids} and involves
only arithmetic operations from number theory. The analysis for non-simple
eigenvalues involves the following operations. First, using
Theorem~\ref{thm:subgrid_partition}, the grid can be partitioned into bricks of
prime dimensions.  This operation is based on a straightforward prime number
factorization. Second, one has to compute the non simple eigenvalues for each
brick. This can be done by using Lemma~\ref{lmm:eigstruc_cartesian_prod} and the
closed form expression for the path eigenvalues given in Appendix. Third, for
each multiple eigenvalue, one has to inspect the symmetries in each brick (again
with simple operations on the brick dimensions according to
Proposition~\ref{prop:subgrid_sym}) and the possible coincidence of symbols (by
means of polynomial evaluations from Theorem~\ref{thm:poly_ratios}). Finally,
one should verify if there are multiple eigenvalues of the main grid that
are not eigenvalues of smaller bricks. However, so far we have never found such
a case in simulations, so that we conjecture that this is unlike or even
impossible to happen.

\section{Conclusions}
\label{sec:conclusions}
In this paper we have characterized the controllability (by duality the
observability) of linear time-invariant systems whose dynamics are induced by
the Laplacian of a grid (or lattice) graph. We have shown that these systems
arise in several fields of application as, in particular, distributed control
and estimation, quantum computation and approximate solution of partial
differential equations. We have characterized the eigenstructure of the grid
Laplacian in terms of suitable graph decompositions and symmetries, and in terms
of simple rules from number theory. Based on this analysis, we have shown what
are all and only the uncontrollable (unobservable) set of nodes and provided
simple routines to choose a set of control (observation) nodes that guarantee
controllability (observability).


\appendix

\section{ Previous results on the controllability and observability of path and cycle graphs}
\label{sec:previous_results}
In this section we briefly recall the results in \cite{GP-GN:12}, see also
\cite{GP-GN:10b}, on the controllability (observability) of path graphs. The characterization of
the controllability (observability) for grid graphs relies on these results.

Since it is extensively used in the paper, we provide the 
expression of the path Laplacian, $L_n$, and of its distinct eigenvalues $\lambda_1,
\ldots, \lambda_n$
\begin{equation*}
  L_n =
  \begin{bmatrix}
    \;1      &      \text{-}1 & 0        &    \dots       & \;0 \\
    \text{-}1      &       \;2 &   \text{-}1      &   \dots     &  \;0\\
    \vdots &         & \ddots      &            &     \\
    \;0      &         &               \text{-}1    &   \;2 &      \text{-}1\\
    \;0      &         &               \;0    &   \text{-}1 &       \;1\\
  \end{bmatrix}
\quad
\text{and}
\quad
 \lambda_{k} = 2-2\cos\left((k-1) \frac{\pi}{n}\right), k=1,\ldots,n.
\end{equation*}

The controllability (observability) of the path can be analyzed by using the PBH lemma in the form
expressed in Lemma~\ref{lmm:PBH_eigvec}.
First, it is known, \cite{GP-GN:10a}, that a path graph is always controllable (observable)
from an external node ($1$ or $n$).
Next theorem, which is Theorem~4.4 in \cite{GP-GN:10b}, completely characterizes
the controllability (observability) of a path by means of simple rules from number theory.

\begin{theorem}[Path controllability and observability]
  \label{thm:main_thm_path}
  Given a path graph of length $n$, let $n = 2^{n_0} \prod_{\nu =1}^k p_\nu$ be
  a prime number factorization for some $k\in \natural$ and distinct (odd) prime
  numbers $p_1, \ldots, p_{k}$. The following statements hold:
  \begin{enumerate}
  \item the path is not completely controllable (observable) from a node
    $i\in\fromto{2}{n-1}$ if and only if
    \[
    (n-i)\; \eqmod{p} \;(i-1)
    \]
    for some odd prime $p$ dividing $n$;
  \item the path is not completely controllable (observable) from a set of nodes
    $I_s = \{i_1, \ldots, i_m\} \subset \fromto{2}{n-1}$ if and only if
    \[
    \begin{split}
      2 (i_1-1) + 1 \eqmod{p} (i_2-i_1) \eqmod{p} 
      \ldots \eqmod{p} i_{m}-i_{m-1} & \eqmod{p} 2 (n-i_m) + 1,
    \end{split}
    \]
    for some odd prime $p$ dividing $n$;
  \item \label{pt:set_unobs_p} for each odd prime factor $p\in\{p_1,\ldots,
    p_k\}$ of $n$,
    the path is not controllable (observable) from each set of nodes $I_s^p = \{
    \ell p - \frac{p-1}{2}\}_{\ell\in\until{\frac{n}{p}}}$
    with the following uncontrollable (unobservable) eigenvalues
    \begin{equation}
      \begin{split}
        \lambda_{\nu} = 2-2\cos\!\left(\!\! (2 \nu-1) \frac{\pi}{{p}}\!\right)\!,\;
        \nu \in\until{\frac{p-1}{2}};
      \end{split}
      \label{eq:unobs_eigs_path}
    \end{equation}
    and uncontrollable (unobservable) eigenvectors
    \begin{equation}
      \begin{split}
        V_{\nu} =
        \begin{bmatrix}
          v_{\nu}^T & \!\!0 &\!\!-(\Pi v_{\nu})^T &\!\!-v_{\nu}^T & \!\!0
          & \!\!\!\ldots &\!\!\! (-1)^{\frac{n}{p}}(\Pi v_{\nu})^T
        \end{bmatrix}^T
      \end{split}
      \label{eq:unobs_eigvecs_path}
    \end{equation}
    where $v_{\nu}\in\real^{(p-1)/2}$ is the eigenvector of $\Lu_{(p-1)/2}$
    corresponding to the eigenvalue $\lambda_{\nu}$ for $\nu
    \in\until{(p-1)/2}$; and
  \item if node $i$ belongs to $I_s^{q_j}=\{ \ell q_j -
    \frac{q_j-1}{2}\}_{\ell\in\until{\frac{n}{q_j}}}$ for $l\leq k$ distinct
    prime factors $q_{1}\neq \ldots \neq q_{l}$ of $n$, then the set of
    uncontrollable (unobservable) eigenvalues from node $i$ is given by
    \begin{equation*}
      \begin{split}
        \lambda_{\nu} = 2-2&\cos\left((2 \nu-1) \frac{\pi}{{q_1\cdot \ldots
              \cdot
              q_l}}\right), \qquad
         \nu \in\until{\frac{(q_1\cdot \ldots \cdot q_l)-1}{2}}.
      \end{split}
    \end{equation*}
    Also, the orthogonal complement to the controllable subspace, $(X_c)^\perp$,
    (respectively the unobservable subspace, $X_{no}$) is spanned by all the
    corresponding eigenvectors of the form
    \begin{equation*}
      \begin{split}
        V_{\nu} =
        \begin{bmatrix}
          v_{\nu}^T & \!\!0 &\!\!-(\Pi v_{\nu})^T &\!\!-v_{\nu}^T &\!\! 0
          & \!\!\!\ldots &\!\!\!(-1)^{\frac{n}{p}}(\Pi v_{\nu})^T
        \end{bmatrix}^T
      \end{split}
    \end{equation*}
    where $v_{\nu}\in\real^{((q_1\cdot\ldots\cdot q_l)-1)/2}$ is the eigenvector
    of $\Lu_{((q_1\cdot\ldots\cdot q_l)-1)/2}$ corresponding to the eigenvalue
    $\lambda_{\nu}$ for $\nu \in\until{((q_1\cdot\ldots\cdot q_l)-1)/2}$.
%
        %
  \end{enumerate} \oprocend
\end{theorem}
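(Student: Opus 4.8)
The plan is to reduce all four statements to the zero pattern of a single Laplacian eigenvector. By the PBH lemma (Lemma~\ref{lmm:PBH_eigvec}), and because $L_n$ is symmetric with \emph{simple} eigenvalues, the path fails to be controllable (observable) from a set of nodes $I_s$ if and only if there is one eigenvector $v_k$ of $L_n$ that vanishes at every node of $I_s$. Hence everything follows once I know, for each eigenvalue $\lambda_k$, exactly which components of its eigenvector are zero. I would first pin down a closed form for these components, then translate the vanishing condition into elementary number theory, and finally read off (i)--(iv).

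For the components I would use Lemma~\ref{lmm:poly_components}(i), which gives $(v_k)_i = p_i(\lambda_k)\,(v_k)_1$ with $p_r$ the polynomial defined by the recursion~\eqref{eq:poly}. Substituting $\lambda_k = 2-2\cos\frac{(k-1)\pi}{n}$ and using the identity $p_r\big(2-2\cos\theta\big) = \cos\frac{(2r-1)\theta}{2}\big/\cos\frac{\theta}{2}$ (which satisfies the same recursion, hence holds by induction), I obtain $(v_k)_i \propto \cos\frac{(2i-1)(k-1)\pi}{2n}$. Therefore $(v_k)_i = 0$ if and only if $(2i-1)(k-1)$ is an \emph{odd} multiple of $n$.

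Statement~(i) is then pure arithmetic. Fix an internal node $i$ and set $d = \gcd(2i-1,n)$; writing $n = d\,n'$, a non-constant eigenvector ($k\in\fromto{2}{n}$) vanishes at $i$ precisely when $k-1 = c\,n'$ with $c$ odd and $c\in\fromto{1}{d-1}$, and such a $c$ exists if and only if $d>1$. Since $2i-1$ is odd, $d>1$ is equivalent to the existence of an odd prime $p$ dividing both $n$ and $2i-1$, and $p\mid n$ together with $p\mid(2i-1)$ is in turn equivalent to $(n-i)\equiv(i-1)\pmod p$; this proves~(i). For~(iii) I would fix a single odd prime $p\mid n$ and take $k-1 = (2\nu-1)\,n/p$: the displayed criterion shows that the eigenvector of the resulting eigenvalue $2-2\cos\frac{(2\nu-1)\pi}{p}$ vanishes exactly on the nodes with $p\mid(2i-1)$, i.e.\ on $I_s^p$, which yields the eigenvalues in~\eqref{eq:unobs_eigs_path} for $\nu\in\until{(p-1)/2}$. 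The block-and-sign shape of the eigenvectors in~\eqref{eq:unobs_eigvecs_path} then follows by writing $n = p\cdot(n/p)$ and applying the flip-replication structure of Lemma~\ref{lmm:sym_path_eigvec_multiple} to the antisymmetric ($v=-\Pi v$) eigenvector of $P_p$, whose leading $(p-1)/2$ entries give $v_\nu$.

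Statements~(ii) and~(iv) combine these facts. For a set $I_s$, a common zero $k$ must satisfy $(2i_j-1)(k-1)\equiv n \pmod{2n}$ simultaneously for all $j$; subtracting consecutive congruences and incorporating the forced boundary (reflection) terms at the two ends produces the chain $2(i_1-1)+1\equiv(i_2-i_1)\equiv\cdots\equiv(i_m-i_{m-1})\equiv 2(n-i_m)+1\pmod p$ of~(ii) for the relevant odd prime $p\mid n$. Statement~(iv), for a node lying in several sets $I_s^{q_j}$, follows because then $q_1\cdots q_l \mid \gcd(2i-1,n)$, so the admissible indices $k-1$ are exactly the odd multiples of $n/(q_1\cdots q_l)$, giving the eigenvalues $2-2\cos\frac{(2\nu-1)\pi}{q_1\cdots q_l}$ with the same eigenvector template. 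I expect the main obstacle to be the bookkeeping in~(ii): deriving the precise congruence chain (including the boundary terms $2(i_1-1)+1$ and $2(n-i_m)+1$) from the simultaneous odd-multiple conditions, and showing that a \emph{single} prime modulus $p$ suffices rather than only the weaker pairwise relations; once the reflection structure is set up, the eigenvalue and eigenvector formulas in~(iii)--(iv) are routine.
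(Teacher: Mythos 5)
First, a point of context: this paper never proves Theorem~\ref{thm:main_thm_path}. It is imported verbatim from \cite{GP-GN:12}, \cite{GP-GN:10b} (the Appendix says so explicitly), so there is no in-paper argument to compare your proposal against; it has to stand on its own as a self-contained proof. For statements (i), (iii) and (iv) it essentially does. Your closed form $(v_k)_i \propto \cos\frac{(2i-1)(k-1)\pi}{2n}$ is correct: the function $\cos\frac{(2r-1)\theta}{2}\big/\cos\frac{\theta}{2}$ does satisfy the recursion \eqref{eq:poly} with the right initializations, and $(v_k)_1=\cos\frac{(k-1)\pi}{2n}\neq 0$ for every $k\le n$, so Lemma~\ref{lmm:poly_components}(i) converts into the vanishing criterion ``$(2i-1)(k-1)$ is an odd multiple of $n$.'' Your gcd bookkeeping for (i), the choice $k-1=(2\nu-1)n/p$ for (iii), and the identification $\gcd(2i-1,n)=q_1\cdots q_l$ for (iv) (valid because the theorem assumes the odd part of $n$ squarefree) are all sound, as is the block/sign structure of \eqref{eq:unobs_eigvecs_path} obtained from Lemma~\ref{lmm:sym_path_eigvec_multiple} applied to the antisymmetric eigenvector of $P_p$. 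The one loose end there is that you never check that the leading $(p-1)/2$ entries form an eigenvector of the matrix $\Lu_{(p-1)/2}$ named in the statement --- a matrix this paper never defines --- but that is notational.

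The genuine gap is statement (ii), and it is worse than the ``bookkeeping'' you anticipate. What your criterion actually yields is: the path is uncontrollable from $I_s$ if and only if some odd prime $p\mid n$ divides \emph{every} $2i_j-1$, i.e.\ if and only if every term of the chain is $\equiv 0 \pmod p$. That is strictly stronger than the stated condition, which only asks that the chain terms be congruent \emph{to one another} mod $p$. Indeed, if $c$ denotes the common residue, the chain forces $2i_j-1\equiv (2j-1)c \pmod p$ and then, using $p \mid n$ in the last term, $2mc\equiv 0\pmod p$; this gives $c\equiv 0$ only when $p\nmid m$. When $p\mid m$ the two conditions genuinely differ: take $n=9$, $I_s=\{4,5,6\}$, $p=3$. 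The chain reads $7\eqmod{3} 1\eqmod{3} 1\eqmod{3} 7$ and holds, yet by your own part (i) (or directly: $\gcd(2\cdot 4-1,\,9)=1$, so no eigenvector of $L_9$ vanishes at node $4$) the path is controllable from node $4$ alone, hence from $I_s$.

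Consequently the plan of ``subtracting consecutive congruences'' cannot close the if-direction of (ii) as quoted: the statement you are trying to prove admits the above counterexample, and your machinery proves the corrected version in which all chain terms are required to vanish mod $p$ (equivalently, $p$ divides $\gcd\big(n,\,2i_1-1,\ldots,2i_m-1\big)$), which coincides with the quoted chain exactly when $p\nmid m$. The honest completion of your proof is to establish that corrected statement and flag the discrepancy with the quoted one, rather than to keep trying to derive the mutual-congruence chain.
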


\begin{remark}[General version of Theorem~\ref{thm:main_thm_path}]
  \label{rmk:general_main_thm_path}
  In the general case of a path graph of length $n = 2^{n_0} \prod_{\nu =1}^k
  p_\nu$, where $p_1, \ldots, p_k$ are not all distinct, statement (i) and (ii)
  of Theorem~\ref{thm:main_thm_path} continue to hold in the same form. As
  regards statement (iii), it still holds in the same form, but it can also be
  strengthen with a slight modification. That is, for each multiple factor
  $\bar{p}$ with multiplicity $\bar{k}$, the statement continues to hold if
  $\bar{p}$ is replaced by $\bar{p}^\alpha$ with
  $\alpha\in\until{\bar{k}}$. 
  Statement (iv) holds if for each prime factor $\bar{p}$ with multiplicity
  $\bar{k}$ we check if node $i$ belongs not only to $I_s^{\bar{p}}$, but also
  to each $I_s^{\bar{p}^\alpha}$ with $\alpha\in\until{\bar{k}}$. Consistently
  the uncontrollable (unobservable) eigenvalues and eigenvectors considered in the
  statement must be constructed by using $\bar{p}^{\bar{\alpha}}$ instead of
  $\bar{p}$, where $\bar{\alpha} = \max_{\alpha} \{\alpha \in \until{\bar{k}} |
  i\in I_s^{\bar{p}^\alpha}\}$. \oprocend
\end{remark}





 \end{document}